\def\A{{\mathscr{A}}}
\def\otm{\otimes}
\def\C{\mathbb C}
\def\R{\mathbb R}
\def\N{\mathbb N}
\def\Z{\mathbb Z}
\def\O{\mathcal{O}}
\def\gl{\mathfrak{gl}_n}
\def\sp{\rm sp}
\def\End{\rm End}
\def\H{\mathcal H}
\def\E{\mathcal E}
\def\F{\mathcal F}
\def\ch{\mathfrak h^*}
\def\Og#1{\mathcal O(#1)}
\def\Ogc#1#2{\mathcal O_{#1}(#2)}
\def\Lo{L_n}
\def\Ld{L_n^{\ast}}
\def\mf{\mathfrak}
\def\sgn{{\rm sgn}}
\def\Id{{\rm Id}}
\newcommand{\oplusop}[1]{{\mathop{\oplus}\limits_{#1}}}
\newtheorem{thm}{Theorem}[section]
\newtheorem{prop}[thm]{Proposition}
\newtheorem{defn}[thm]{Definition}
\numberwithin{equation}{section}
\date{}
\begin{document}

\thispagestyle{empty}

\begin{center}
{\bf{\LARGE  A Categorification of the Spin Representation of
$U(\mf{so}(7,\C))$ via Projective Functors} \footnotetext { $\dag$
Supported by National Natural Science Foundation of Beijing (Grant.
1122006)

\small Email: yjxu2002@163.com,\quad slyang@bjut.edu.cn }}

\bigbreak

\normalsize Yongjun Xu,  Shilin Yang$^{\dag}$

{\footnotesize\small\sl College of Applied Sciences,
Beijing University of Technology\\
\footnotesize\sl Beijing 100124, P. R.  China }
\end{center}

\begin{quote}
{\noindent{\bf Abstract.} The purpose of this paper is to study a
categorification of the $n$-th tensor power of the spin
representation of $U(\mf{so}(7,\C))$ by using certain subcategories
and projective functors of the BGG category of the complex Lie
algebra $\mf{gl}_n$.}
\end{quote}

\noindent {\bf Key Words:}\quad BGG category; Categorification; Lie
algebra; Projective functor; Spin representation.

\noindent {\bf Mathematics Subject Classification:}\quad 17B10.

\section{Introduction}
The general idea of categorification was introduced by Crane and
Frenkel \cite{C,CF}. In recent years categorifications of
algebras and their representations have been studied by many
mathematicians, see for example \cite{KMS479,M,MM,R} and references
therein.

 Let $\Og{\mf{g}}$ be the BGG category associated to a
triangular decomposition of a finite dimensional complex semisimple
or reductive Lie algebra $\mf{g}$.
  The BGG category $\Og{\mf{g}}$ and its projective functors
 play an important role in many algebraic categorifications,
 which can be seen from the following two facets.
On one hand, the projective functors of $\Og{\mf{g}}$ are
extensively used in catetegorifications
 of group algebras of finite Weyl groups and their Hecke algebras.
 In \cite{KMS479}, Khovanov, $et~al.$
 presented several examples about
categorifications of various representations of the symmetric group
$S_n$ via projective functors acting on certain subcategories of the
BGG category $\Og{\mf{sl}_n}$. Especially in \cite{KMS1163} they
categorified integral Specht modules over $S_n$ and
its Hecke algebra by some translation functors of
$\Og{\mf{sl}_n}$. Mazorchuk and Stroppel \cite{MS1363}
constructed a subcategory of $\Og{\mf{g}}$ on which the actions of
translation functors categorify (right) cell modules and induced
cell modules for Hecke algebras of finite Weyl groups. Basing on the
results in \cite{MS1363}, they \cite{MS1}
 gave a categorification of Wedderburn¡¯s
basis for $\C[S_n]$. Moreover, Mazorchuk and Miemietz \cite{MM}
 reproved and extended the result in
\cite{MS1363} by studying 2-representations of abstract 2-categories
from a more systematic and more abstract prospective. In addition,
Mazorchuk and Stroppel \cite{MS2939} applied graded versions of
translation functors and a subcategory of the principal block of
$\Og{\mf{g}}$ to categorifications of a parabolic Hecke module (see
also \cite{S}). On the other hand, the BGG category $\Og{\mf{g}}$
and its projective functors can be applied to categorificaitons of
universal enveloping algebras of simple Lie algebras. In \cite{BFK}
Bernstein, $et~al.$ investigated a categorification of the $n$-th
tensor power of the fundamental representation of $U(\mf{sl}_2)$ via
certain singular blocks and projective functors of
$\Og{\mf{gl}_n}$ (see also
\cite{KMS479}). Following \cite{BFK} Sussan \cite{SU} generalized
the case of $\mf{sl}_2$ in \cite{BFK} to that of $\mf{sl}_k$ and
studied $\mf{sl}_k$-link invariants.

In \cite{BFK} Bernstein, $et~al.$ raised a more difficult problem:
categorifications of the representation theory of arbitrary simple
Lie algebra $\mf{g}$. The main purpose of this article is to study a
categorification of the $n$-th tensor power of the spin
representation of $U(\mf{so}(7,\C))$. The main tools for our
categorification are also the BGG category $\Og{\mf{gl}_n}$ and its
projective functors. Our work can be considered as a part of
categorifications of the representation theory of $U(\mf{g})$ for
the simple Lie algebra $\mf{g}$ of type $B_3$. In other words, we
categorify the image of $U(\mf{so}(7,\C))$ under the algebra
homomorphism $\Phi:U(\mf{so}(7,\C))\to {\End}(V_{\sp}^{\otimes n})$
corresponding to the $n$-th tensor power of the spin representation
$V_{\sp}$ of $U(\mf{so}(7,\C))$. In fact, as standard
representations of the special orthogonal Lie algebras
$\mf{so}(m,\C)$, the spin representations are especially important
since they not only play a fundamental role in the realization of
exceptional simple Lie algebras but also have many important
applications in Lie group, geometry and physics (see \cite{LM}
Chapters I.5 and IV.9, and \cite{FH} Chapter 20).

This paper is organized as follows. In Section \ref{sect-2}, we
collect the background material that will be necessary in the
sequel. In Section \ref{sect-3}, we obtain a categorification of the
$n$-th tensor power of the spin representation of
$U(\mf{so}(7,\C))$. First, we categorify the underlying space of the
$n$-th tensor power $V_{\sp}^{\otm n}$ of the spin representation
$V_{\sp}$ for $U(\mf{so}(7,\C))$ by using certain subcategories of
 $\O(\mf{gl}_n)$ (Theorem \ref{t1}). Next we yield a
categorification of the $U(\mf{so}(7,\C))$ action on $V_{\sp}^{\otm
n}$ by projective functors of $\O(\mf{gl}_n)$ (Theorem \ref{c1}).
Finally, we lift defining relations of $U(\mf{so}(7,\C))$ to
 natural isomorphisms between functors (Theorem \ref{t3}).

Throughout, we denote by $\C,$ $\R$ and $\Z$ the complex number
field, the real number field and the set of integers respectively.

\section{Preliminaries}\label{sect-2}

We start by reviewing some basic results about the universal
enveloping algebra of Lie algebra $\mf{so}(7,\C)$ and the BGG
category of a complex reductive Lie algebra.

As an associative algebra, the universal enveloping algebra
$U(\mf{so}(7,\C))$ of the special orthogonal Lie algebra
$\mf{so}(7,\C)$ is generated by $h_i, e_i, f_i (1\leq i\leq 3)$ over
$\C$ which are subject to the following relations:
\begin{eqnarray*}h_i h_j=h_j h_i,\quad e_i f_j-f_j e_i=\delta_{i,j}h_i,\end{eqnarray*}
\begin{eqnarray*}h_i e_j-e_j h_i=a_{i,j}e_j,\quad h_i f_j-f_j h_i=-a_{i,j}f_j,\end{eqnarray*}
\begin{eqnarray*}{\mathop{\sum}\limits_{k=0}^{1-a_{i,j}}}
(-1)^k \Big( {\begin{array}{*{20}c}
   1-a_{i,j}  \\
   k  \\
\end{array}} \Big)
e_i^{1-a_{i,j}-k}e_j e_i^k=0~for~i\neq j,
\end{eqnarray*}
\begin{eqnarray*}
{\mathop{\sum}\limits_{k=0}^{1-a_{i,j}}}
(-1)^k \Big( {\begin{array}{*{20}c}
   1-a_{i,j}  \\
   k  \\
\end{array}} \Big)
f_i^{1-a_{i,j}-k}f_j f_i^k=0~for~i\neq j,
\end{eqnarray*} where
$a_{i,j}(1\leq i,j\leq 3)$ are the entries of the Cartan matrix
$A=(a_{i,j})_{3\times 3}$ of $\mf{so}(7,\C)$.

Let $V_{\sp}={\mathop{\oplus}\limits_{i=0}^{7}}\C v_i$ be an
8-dimensional vector space over $\C$. Then $V_{\sp}$ is a
$U(\mf{so}(7,\C))$-module in the following way:
\begin{eqnarray*}
&& h_1 v_7=0,~h_1 v_6=0,~h_1 v_5=v_5,~h_1 v_4=v_4,~h_1 v_3=-v_3,
~h_1 v_2=-v_2, ~h_1 v_1=0,~ h_1 v_0=0,\\
&&h_2 v_7=0,~ h_2 v_6=v_6,~ h_2 v_5=-v_5,~ h_2 v_4=0,~ h_2
v_3=0,~h_2
v_2=v_2, ~h_2 v_1=-v_1, ~ h_2 v_0=0,\\
&&h_3 v_7=v_7, ~ h_3 v_6=-v_6, ~ h_3 v_5=v_5, ~ h_3 v_4=-v_4, ~h_3
v_3=v_3, ~ h_3 v_2=-v_2, ~ h_3 v_1=v_1, ~ h_3 v_0=-v_0,\\
&&e_1 v_7=0, ~ e_1 v_6=0, ~ e_1 v_5=0, ~ e_1 v_4=0, ~ e_1 v_3=v_5, ~
e_1 v_2=v_4, ~ e_1 v_1=0, ~ e_1 v_0=0,\\
&&e_2 v_7=0, ~ e_2 v_6=0, ~ e_2 v_5=v_6, ~ e_2 v_4=0, ~
e_2 v_3=0, ~ e_2 v_2=0, ~ e_2 v_1=v_2, ~ e_2 v_0=0,
\end{eqnarray*}
\begin{eqnarray*}
&&e_3 v_7=0, ~ e_3 v_6=v_7, ~ e_3 v_5=0, ~ e_3 v_4=v_5, ~ e_3 v_3=0,
~ e_3 v_2=v_3, ~ e_3 v_1=0, ~ e_3
v_0=v_1,\\
&&f_1 v_7=0, ~ f_1 v_6=0, ~ f_1 v_5=v_3, ~ f_1
v_4=v_2, ~ f_1 v_3=0, ~ f_1 v_2=0, ~ f_1 v_1=0, ~ f_1 v_0=0,\\
&&f_2 v_7=0, ~ f_2 v_6=v_5, ~ f_2 v_5=0, ~ f_2 v_4=0, ~
f_2 v_3=0, ~ f_2 v_2=v_1, ~ f_2 v_1=0, ~ f_2 v_0=0,\\
&&f_3 v_7=v_6, ~ f_3 v_6=0, ~ f_3 v_5=v_4, ~ f_3 v_4=0, ~ f_3
v_3=v_2, ~ f_3 v_2=0, ~ f_3 v_1=v_0, ~ f_3 v_0=0.
\end{eqnarray*}
The $U(\mf{so}(7,\C))$-module $V_{\sp}$ is called the spin
representation of $U(\mf{so}(7,\C))$.

For convenience, we fix some notations which we need in the sequel.
All Lie algebras and their representations are defined over $\C$.
Let $\mf{g}$ be a finite dimensional reductive Lie algebra with a
fixed triangular decomposition $\mf{g}=\mf{n}_+\oplus \mf{h}\oplus
\mf{n}_-.$ Denote by $U(\mf{g})$ the universal enveloping algebra of
$\mf{g}$, $Z(U(\mf{g}))$ the center of $U(\mf{g})$ and $\Theta$ the
set of the central characters. $W$ denotes the Weyl group of $\mf
g$. $\rho$ is the half-sum of positive roots. Define the dot-action
of $W$ on $\ch$ as follows: $w\cdot \lambda=w(\lambda+\rho)-\rho$.
For $\lambda\in \ch$, let
$\theta_{\lambda}:Z(U(\mf{g}))\rightarrow\C$ be the corresponding
central character and $M(\lambda)$ the Verma module with the highest
weight $\lambda.$

Let $\ch_{dom}$ be the set of all elements in $\ch$ dominant with
respect to the dot-action. Then there is a map
$\eta:\ch\rightarrow\Theta$ which maps $\lambda$ to
$\theta_{\lambda}$ sets up a bijection between $\ch_{dom}$ and
$\Theta$ (see \cite{Di}, Section 7.4). The notation $\Og{\mf{g}}$
denotes the BGG category of $\mf{g}$ associated to the
 triangular decomposition $\mf{g}=\mf{n}_+\oplus \mf{h}\oplus\mf{n}_-.$
For any $\theta\in \Theta$ denote by $\Ogc{\theta}{\mf{g}}$ the full
subcategory of $\Og{\mf{g}}$ whose objects are the modules $M$ where
$$M=\bigg\{m\in M\big|(z-\theta(z))^{n}\cdot m=0~{\rm for~some~}n\in
\N~{\rm for~each}~z\in Z(U(\mf{g}))\bigg\}.$$ The BGG category
$\Og{\mf{g}}$ is the direct sum of the subcategories
$\Ogc{\theta}{\mf{g}}$ as $\theta$ ranges over the central
characters of the form $\theta_{\lambda}$ (see \cite{H}, Section
1.12).

Now we give a brief introduction to projective functors.

Denote by $\textrm{proj}_{\theta}$ the functor from $\Og{\mf{g}}$ to
$\Og{\mf{g}}$ that, to a module $M=\oplusop{\theta\in
\Theta}M(\theta),$ associates the $\theta$-component summand
$M(\theta)$ of $M$. Let $F_V$ be the functor of tensoring with a
finite-dimensional $\mf{g}$-module $V.$

\begin{defn}
$F:\Og{\mf{g}} \to \Og{\mf{g}}$ is a projective functor if it is
isomorphic to a direct summand of the functor $F_V$ for some finite
dimensional $\mf{g}$-module $V.$
\end{defn}

Denote by $K(\A)$ the Grothendieck group of an abelian or
triangulated category $\A.$ It is the free abelian group generated
by the symbols $[M]$ where $M$ is an object of $\A.$ The only
relations in this group are of the form $[N]=[M]+[P]$ when there is
a short exact sequence or distinguished triangle of the form $0 \to
M \to N \to P \to 0.$ The image of an object $M$ and an exact
functor $F$ in the Grothendieck group will be denoted by $[M]$ and
$[F]$ respectively.

The following properties of projective functors can be found in
Section 3.2 or 3.4 of \cite{BG}.

\begin{prop}
\label{p3}
\begin{enumerate}
\item
Projective functors are exact.
\item
Any direct sum of projective functors is a projective functor.
\item
Any composition of projective functors is a projective functor.
\item
The functor $\textrm{\rm proj}_{\theta}:\Og{\mf{g}} \to \Og{\mf{g}}$
is a projective functor.
\item
Let $F,G$ be projective functors. If $[F]=[G],$ then $F\cong G.$
\end{enumerate}
\end{prop}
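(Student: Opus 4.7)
The plan is to verify the five assertions in turn, with the first four resting on elementary manipulations of the tensor product and the block decomposition, while the fifth is the substantive content and will require the Bernstein--Gelfand classification of indecomposable projective functors.

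For (1), I would note that for any finite-dimensional $\mf{g}$-module $V$ the functor $F_V=V\otimes_{\C}(-)$ is exact, because tensoring over $\C$ with a finite-dimensional vector space preserves short exact sequences of $\mf{g}$-modules. Exactness then descends to direct summands, since the summand of a short exact sequence is again a short exact sequence. For (2) and (3), I would use the natural isomorphisms $F_V\oplus F_W\cong F_{V\oplus W}$ and $F_V\circ F_W\cong F_{V\otimes W}$. If $F_i$ is a summand of $F_{V_i}$ for $i=1,2$, then $F_1\oplus F_2$ is a summand of $F_{V_1\oplus V_2}$ and $F_2\circ F_1$ is a summand of $F_{V_1\otimes V_2}$, both of which involve only finite-dimensional modules.

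For (4), the identity functor on $\Og{\mf{g}}$ is $F_{\C}$, tensoring with the one-dimensional trivial module. The block decomposition $\Og{\mf{g}}=\oplusop{\theta\in\Theta}\Ogc{\theta}{\mf{g}}$ supplied in Section 2 gives a functor isomorphism
\[
F_{\C}\;\cong\;\oplusop{\theta\in\Theta}\textrm{proj}_{\theta},
\]
so each $\textrm{proj}_{\theta}$ is a direct summand of a $F_V$ with $V=\C$, hence projective.

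The main obstacle is (5), for which I would proceed as follows. By the Krull--Schmidt property in $\Og{\mf{g}}$, both $F$ and $G$ decompose as finite direct sums of indecomposable projective functors, which by (2) remain projective. Using the Bernstein--Gelfand classification, each indecomposable projective functor is determined up to isomorphism by a pair of dot-dominant weights (together with the Weyl group datum specifying how it moves Verma modules between blocks), and its action on a generic Verma module $M(\lambda)$ admits a canonical Verma filtration. The key technical point is to show that the classes of the indecomposable projective functors are linearly independent in the group of endomorphisms of $K(\Og{\mf{g}})$; this one verifies by selecting, for each indecomposable $F_{\lambda,\mu}$, a Verma module on whose class a specific Verma-filtration subquotient appears with multiplicity exactly one in $[F_{\lambda,\mu}][M(\lambda)]$ and with multiplicity zero in the classes of all other indecomposables. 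Granted this independence, the equality $[F]=[G]$ forces equal multiplicities of every indecomposable summand in $F$ and $G$, whence $F\cong G$. The exactness guaranteed by (1) is what makes $[F]$ and $[G]$ well-defined in the first place, so the five parts fit together in the order above.
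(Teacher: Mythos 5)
The paper offers no proof of this proposition at all: it simply cites Sections 3.2 and 3.4 of Bernstein--Gelfand \cite{BG} immediately before the statement. Your proof of parts (1)--(4) is exactly the elementary argument one would extract from \cite{BG} (exactness of $V\otimes_{\C}(-)$ over a field descending to summands; $F_V\oplus F_W\cong F_{V\oplus W}$ and $F_V\circ F_W\cong F_{V\otimes W}$ together with distributivity of $\circ$ over $\oplus$; and $\Id_{\Og{\mf g}}=F_{\C}\cong\bigoplus_\theta \mathrm{proj}_\theta$), and for (5) you correctly identify that the Bernstein--Gelfand classification of indecomposable projective functors together with Krull--Schmidt is the substance of the result. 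So in spirit your route is the same as the reference the paper invokes.

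There is one genuine imprecision in your treatment of (5). You claim one can choose, for each indecomposable projective functor, a Verma-filtration subquotient that appears with multiplicity one in its class and multiplicity \emph{zero} in the class of every other indecomposable. That is too strong and in general false: for instance, the identity functor on a block and the wall-crossing functor $\theta_s$ both act on a dominant $M(\lambda)$, and $[\theta_s M(\lambda)]=[M(\lambda)]+[M(s\cdot\lambda)]$ contains $[M(\lambda)]$ with coefficient one, so the supports are not disjoint. The correct mechanism, as in \cite{BG}, is a \emph{unitriangularity} statement: indecomposable projective functors $F_{\theta,\eta}$ are indexed by weights $\eta$ in a dot-orbit, and $[F_{\theta,\eta}M(\xi)]=[M(\eta)]+\sum_{\eta'\prec\eta}c_{\eta'}[M(\eta')]$ for a suitable partial order $\prec$ and a dominant $\xi$ with $\theta_\xi=\theta$. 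Unitriangularity (not disjointness of supports) is what yields linear independence of the classes, after which Krull--Schmidt together with $[F]=[G]$ forces matching multiplicities of indecomposable summands. You should also note that the classification in \cite{BG} is block-by-block, i.e.\ for projective functors $\Ogc{\theta}{\mf g}\to\Og{\mf g}$, so one first restricts $F$ and $G$ to each block $\Ogc{\theta}{\mf g}$ via $\mathrm{proj}_\theta$ before applying the argument.
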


Fix a central character $\theta,$ then
\begin{eqnarray}\label{base}
\bigg\{~[M(\lambda)]~\big|~\theta=\theta_{\lambda}~\bigg\}
=\bigg\{~[M(\mu)]~\big|~\mu\in W\cdot\lambda~\bigg\}
\end{eqnarray}
forms a $\Z$-basis of the Grothendieck group
$K(\Ogc{\theta}{\mf{g}})$ (see \cite{H}, Section 1.10 and 1.12). The
following proposition shows that this basis is handy for writing the
action of projective functors on the Grothendieck group of
$\O(\mf{g})$(see \cite{BFK}, Section 2.3.2, and \cite{BG}, Section
1.12).

\begin{prop}
\label{p4} Let $V$ be a finite-dimensional $\mf{g}$-module,
$\mu_1,\cdots,\mu_m$ the multiset of weights of $V$, i.e., there is
a basis $v_1,v_2,\cdots, v_m$ of $V$ such that the weight of the
vector $v_i$ equals $\mu_i$,
 $M(\lambda)$ the Verma
module with the highest weight $\lambda,$ then we have $[V\otimes
M(\lambda)]=\sum_{i=1}^m [M(\lambda+\mu_i)]$ in the Grothendieck
group $K(\Og{\mf{g}}).$
\end{prop}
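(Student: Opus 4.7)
My plan is to exhibit a filtration of $V \otimes M(\lambda)$ whose successive quotients are precisely the Verma modules $M(\lambda+\mu_i)$, and then take classes in the Grothendieck group.

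First, I would construct a filtration of $V$ by $\mathfrak{b}$-submodules, where $\mathfrak{b}=\mathfrak{h}\oplus\mathfrak{n}_+$ is the Borel subalgebra. Order the weight basis $v_1,\dots,v_m$ so that whenever $\mu_j-\mu_i$ is a nonzero sum of positive roots, we have $j<i$; any linear extension of the standard partial order on weights does the job. Setting $V_i=\mathrm{span}(v_1,\dots,v_i)$ gives a chain $0=V_0\subset V_1\subset\cdots\subset V_m=V$. Since $\mathfrak{n}_+$ raises weights and the ordering puts higher weights first, each $V_i$ is stable under $\mathfrak{b}$, and the quotient $V_i/V_{i-1}$ is the one-dimensional $\mathfrak{b}$-module $\C_{\mu_i}$ on which $\mathfrak{h}$ acts by $\mu_i$ and $\mathfrak{n}_+$ acts by zero.

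Next I would invoke the tensor identity (projection formula): for any $\mathfrak{g}$-module $V$ and $\mathfrak{b}$-module $N$, there is a canonical isomorphism
\begin{equation*}
V\otimes\bigl(U(\mathfrak{g})\otimes_{U(\mathfrak{b})} N\bigr)\;\cong\;U(\mathfrak{g})\otimes_{U(\mathfrak{b})}(V\otimes N),
\end{equation*}
given by $x\otimes(u\otimes n)\mapsto u_{(1)}\otimes(S(u_{(2)})x\otimes n)$ with inverse $u\otimes(x\otimes n)\mapsto u_{(1)}x\otimes(u_{(2)}\otimes n)$ in suitable Hopf-algebra notation (a direct check on generators works equally well). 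Applying this with $N=\C_\lambda$ yields
\begin{equation*}
V\otimes M(\lambda)\;\cong\;U(\mathfrak{g})\otimes_{U(\mathfrak{b})}(V\otimes\C_\lambda).
\end{equation*}
The $\mathfrak{b}$-filtration of $V$ tensored with $\C_\lambda$ gives a $\mathfrak{b}$-filtration of $V\otimes\C_\lambda$ with successive quotients $\C_{\lambda+\mu_i}$.

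Because $U(\mathfrak{g})$ is a free right $U(\mathfrak{b})$-module by the Poincar\'e--Birkhoff--Witt theorem, the induction functor $U(\mathfrak{g})\otimes_{U(\mathfrak{b})}(-)$ is exact. Applying it to the $\mathfrak{b}$-filtration above therefore produces a $\mathfrak{g}$-module filtration of $V\otimes M(\lambda)$ whose $i$-th successive quotient is $U(\mathfrak{g})\otimes_{U(\mathfrak{b})}\C_{\lambda+\mu_i}=M(\lambda+\mu_i)$. Each resulting short exact sequence contributes the relation $[F_i]=[F_{i-1}]+[M(\lambda+\mu_i)]$ in $K(\O(\mathfrak{g}))$, and telescoping gives $[V\otimes M(\lambda)]=\sum_{i=1}^m[M(\lambda+\mu_i)]$.

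The only genuine subtlety is the construction of the $\mathfrak{b}$-stable filtration with the correct one-dimensional weight quotients — once that is in place, the tensor identity and PBW exactness package everything into the desired Grothendieck-group identity. I do not anticipate any obstacle beyond correctly pinning down the ordering of weights so that $\mathfrak{n}_+$ acts strictly upper-triangularly.
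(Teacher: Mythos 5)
Your argument is correct and complete. Note that the paper itself gives no proof of this proposition but merely cites \cite{BFK} (Section 2.3.2) and \cite{BG} (Section 1.12); what you have reproduced is the canonical proof (it is Theorem 3.6 in Humphreys' book \cite{H}, and is also the argument in Bernstein--Gelfand). The three ingredients you identify -- the $\mathfrak{b}$-stable filtration of $V$ with one-dimensional weight quotients obtained by linearly refining the dominance order, the tensor identity $V\otimes\bigl(U(\mathfrak{g})\otimes_{U(\mathfrak{b})}N\bigr)\cong U(\mathfrak{g})\otimes_{U(\mathfrak{b})}(V\otimes N)$, and exactness of induction from PBW freeness -- are exactly the standard ones, and the telescoping in $K(\O(\mathfrak{g}))$ is immediate once the Verma flag is in hand. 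One very minor point worth spelling out if you write this up formally: you should remark that $V\otimes M(\lambda)$ lies in $\O(\mathfrak{g})$ (tensoring with a finite-dimensional module preserves $\O$), so that the Grothendieck-group identity is actually being computed in the right place; but this is routine and your proof otherwise has no gaps.
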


For unexplained concepts and notations, we refer the reader to
\cite{BG,BGG,FH,HK,H,M}.

\section{Categorification of the Spin Representation of
$U(\mf{so}(7,\C))$}\label{sect-3}

This section is to obtain a categorification of the
$n$-th tensor power $V_{\sp}^{\otm n}$ of the spin representation
$V_{\sp}$ for $U(\mf{so}(7,\C))$ in the following
three steps.
\begin{itemize}
\item[(1)]  Categorifying the underlying space
of the $n$-th tensor power $V_{\sp}^{\otm n}$ of the spin
representation $V_{\sp}$ of $U(\mf{so}(7,\C))$ by using certain
subcategories of the BGG category $\O(\mf{gl}_n)$.
\item[(2)] Yielding a
categorification of the $U(\mf{so}(7,\C))$ action on $V_{\sp}^{\otm
n}$ by projective functors of $\O(\mf{gl}_n)$.
\item[(3)] Lifting defining relations of $U(\mf{so}(7,\C))$
 to natural isomorphisms between functors.
\end{itemize}

We fix once and for all a triangular decomposition
$\mathfrak{n}_+\oplus \mathfrak{h}\oplus \mathfrak{n}_-$ of the Lie
algebra $\gl.$ The Weyl group of $\mf{gl_n}$ is isomorphic to the
symmetric group $S_n.$ Choose a standard orthogonal basis
$\varepsilon_1,\cdots,\varepsilon_n$ in the Euclidean space $\R^n$
and identify the complexification $\C\otm_{\R}\R^n$ with the dual
$\ch$ of Cartan subalgebra so that $R_+=\{~
\varepsilon_i-\varepsilon_j~|~1\leq i < j\leq n~\}$ is  the set of
positive roots and $\beta_i=\varepsilon_i-\varepsilon_{i+1},1\le
i\le n-1$ are simple roots. The generator $s_i$ of the Weyl group
$W=S_n$ acts on $\ch$ by permuting
 $\varepsilon_i$ and $\varepsilon_{i+1}.$
Denote by $\rho$ the half-sum of positive roots
$$\rho=\frac{n-1}{2}\varepsilon_1+\frac{n-3}{2}\varepsilon_2+\cdots+\frac{1-n}{2}\varepsilon_n.$$
We denote by $[0, 7]$ the integers $0\leq k\leq 7.$ For a sequence
$(a_1,\cdots, a_n)\in[0, 7]^n$ we denote by $M(a_1,\cdots, a_n)$ the
Verma module with the highest weight
$a_1\varepsilon_1+\cdots+a_n\varepsilon_n-\rho.$

Let ${\bf D}$ be the set of all 8-tuples of nonnegative integers
${\bf d}=(d_0,d_1,d_2,d_3,d_4,d_5,d_6,d_7)$ such that
${\mathop{\sum}\limits_{k=0}^{7}}d_k=n$. We define the following
equivalence relation $\thicksim$ on ${\bf D}$:
\begin{eqnarray*}
{\bf d}\thicksim {\bf d^{'}}&\Leftrightarrow&\left\{
 \begin{array}{ll}
  d_7+d_6+d_5+d_4-d_3-d_2-d_1-d_0=d_7^{'}+d_6^{'}+d_5^{'}+d_4^{'}-d_3^{'}-d_2^{'}-d_1^{'}-d_0^{'}, &\\
 d_7+d_6-d_5-d_4+d_3+d_2-d_1-d_0=d_7^{'}+d_6^{'}-d_5^{'}-d_4^{'}+d_3^{'}+d_2^{'}-d_1^{'}-d_0^{'},& \\
 d_7-d_6+d_5-d_4+d_3-d_2+d_1-d_0=d_7^{'}-d_6^{'}+d_5^{'}-d_4^{'}+d_3^{'}-d_2^{'}+d_1^{'}-d_0^{'},&
 \end{array}
\right.
\end{eqnarray*}
for any ${\bf d}=(d_0,d_1,d_2,d_3,d_4,d_5,d_6,d_7),{\bf
d^{'}}=(d_0^{'},d_1^{'},d_2^{'},d_3^{'},d_4^{'},d_5^{'},d_6^{'},d_7^{'})\in
{\bf D}$. In the following $[{\bf d}]$ and $\widetilde{{\bf D}}$
represent the equivalent class of ${\bf d}$ and the set of all the
the equivalent classes respectively.

The spin representation $V_{\sp}$ has the weight space decomposition
$V_{\sp}={\mathop{\oplus}\limits_{k=0}^{7}}V_{k},$ where $V_{k}=\C
v_k$~for~$0\leq k\leq 7$ (see \cite{HK} Chapter 2). For ${\bf d'}\in
{\bf D}$ and $(a_1,\cdots, a_n)\in[0, 7]^n$, we define the following
condition:
\begin{eqnarray}\label{eqn3-1}\sharp~ \{~a_m~|~a_m=k, 1\leq m \leq n\}
=d_k^{'}~for~0\leq k\leq 7.
\end{eqnarray}
It follows that $V_{\sp}^{\otm n}$ has the weight space
decomposition $V_{\sp}^{\otm n}={\mathop{\oplus}\limits_{[{\bf
d}]\in \widetilde{{\bf D}}}}(V_{\sp}^{\otm n})_{[{\bf d}]},$ where
$(V_{\sp}^{\otm n})_{[{\bf d}]}$ is the $\C$-linear space spanned by
$$B^{'}_{[{\bf d}]}:=\bigg\{ v_{a_{1}}\otimes v_{a_{2}}\otimes\cdots
\otimes v_{a_{n}} \big{|} (a_1, \cdots, a_n)\in [0, 7]^n \hbox{
satisfying the conditon (\ref{eqn3-1}) for some}~{\bf d'}\in[{\bf
d}]\bigg\}.$$

From now on, we denote by $^{\Z}(V_{\sp}^{\otm n})_{[{\bf d}]}$ the
$\Z$-module spanned by $B^{'}_{[{\bf d}]}$ and $^{\Z}V_{\sp}^{\otm
n}:={\mathop{\oplus}\limits_{[{\bf d}]\in \widetilde{{\bf
D}}}}{^{\Z}(V_{\sp}^{\otm n})_{[{\bf d}]}}.$ It is easy to see that
$\C\otm_{\Z}{^{\Z}(V_{\sp}^{\otm n})_{[{\bf d}]}}=(V_{\sp}^{\otm
n})_{[{\bf d}]}$ and $\C\otm_{\Z}{^{\Z}V_{\sp}^{\otm
n}}=V_{\sp}^{\otm n}.$ For each ${\bf d}\in {\bf D}$, set
$\lambda_{\bf d}={\mathop{\sum}\limits_{i=0}^{7}}
{\mathop{\sum}\limits_{j=1}^{d_i}} (7-i) \varepsilon_{d_0 + \cdots +
d_{i-1} + j}.$ Denote by $\theta_{\bf d}=\eta(\lambda_{\bf d}-\rho)$
the corresponding central character of $\mf{gl}_n$ under the map
$\eta: \ch\rightarrow\Theta$. We define $\O_{\bf d}:=\O_{\theta_{\bf
d}}(\mf{gl}_n),$ $\O_{[{\bf d}]}:={\mathop{\oplus}\limits_{{\bf
d^{'}}\in[{\bf d}]}}\O_{\bf d^{'}}$ and
$\O^{n}:={\mathop{\oplus}\limits_{[{\bf d}]\in \widetilde{{\bf
D}}}}\O_{[{\bf d}]}.$

Now we are prepared to
realize $^{\Z}V_{\sp}^{\otm n}$ and its weight space
$^{\Z}(V_{\sp}^{\otm n})_{[{\bf d}]}$ for any $[{\bf d}]\in
\widetilde{{\bf D}}$ as the Grothendieck groups of the categories
$\O^n$ and $\O_{[{\bf d}]}$ respectively. Indeed, we have the
following result.

\begin{thm}
\label{t1} There exists an isomorphism of abelian groups
$\gamma_n:K(\O^n)\rightarrow ^{\Z}V_{\sp}^{\otm n}$ given by
\[\gamma_n([M(a_1,\cdots, a_n)])=v_{a_{1}}\otimes v_{a_{2}}\otimes\cdots \otimes v_{a_{n}}\]
for any sequence $(a_1,\cdots, a_n)\in[0, 7]^n.$  Moreover, the
restriction of $\gamma_n$ on $K(\O_{[{\bf d}]})$ is an abelian group
 isomorphism between $K(\O_{[{\bf d}]})$ and
$^{\Z}(V_{\sp}^{\otm n})_{[{\bf d}]}$ for any $[{\bf d}]\in
\widetilde{{\bf D}}$.
\end{thm}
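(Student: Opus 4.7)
The plan is to establish the isomorphism at the level of $\Z$-bases. Both $K(\O^n)$ and $^{\Z}V_{\sp}^{\otm n}$ carry natural $\Z$-bases indexed by the sequences $(a_1,\ldots,a_n)\in[0,7]^n$, and $\gamma_n$ will simply relabel one basis as the other. The main input is the Verma-module basis (\ref{base}) of the Grothendieck group of a single block of $\Og{\gl}$.

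First I would use the block decomposition $\O^n=\bigoplus_{{\bf d}\in{\bf D}}\O_{{\bf d}}$, which gives $K(\O^n)=\bigoplus_{{\bf d}\in{\bf D}}K(\O_{{\bf d}})$. By (\ref{base}), each $K(\O_{{\bf d}})$ has $\Z$-basis $\{[M(\mu)]:\mu\in W\cdot(\lambda_{{\bf d}}-\rho)\}$. Since $W=S_n$ acts by permuting $\varepsilon_1,\ldots,\varepsilon_n$, the dot-orbit of $\lambda_{{\bf d}}-\rho$ consists precisely of the weights $\sum a_i\varepsilon_i-\rho$ with $(a_1,\ldots,a_n)\in[0,7]^n$ a permutation of the coefficient sequence of $\lambda_{{\bf d}}$; that coefficient sequence contains $d_i$ entries equal to $7-i$ for each $0\le i\le 7$. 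Thus $K(\O_{{\bf d}})$ acquires a $\Z$-basis naturally labelled by such sequences, and as ${\bf d}$ varies over ${\bf D}$, every sequence in $[0,7]^n$ arises from a unique ${\bf d}$, namely the one recording the multiset of entries. On the tensor side, $\{v_0,\ldots,v_7\}$ is a $\C$-basis of $V_{\sp}$, so $\{v_{a_1}\otm\cdots\otm v_{a_n}:(a_1,\ldots,a_n)\in[0,7]^n\}$ is a $\Z$-basis of $^{\Z}V_{\sp}^{\otm n}$ by definition. Consequently, the assignment $[M(a_1,\ldots,a_n)]\mapsto v_{a_1}\otm\cdots\otm v_{a_n}$ is a bijection of $\Z$-bases and extends by $\Z$-linearity to an isomorphism of abelian groups, proving the first assertion.

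For the refined statement about blocks and weight spaces, I would verify that under $\gamma_n$ the subgroup $K(\O_{[{\bf d}]})$ corresponds to $^{\Z}(V_{\sp}^{\otm n})_{[{\bf d}]}$. By the previous paragraph, $[M(a_1,\ldots,a_n)]\in K(\O_{[{\bf d}]})$ iff the multiset of $(a_1,\ldots,a_n)$ coincides with the coefficient multiset of $\lambda_{{\bf d}'}$ for some ${\bf d}'\in[{\bf d}]$, while by construction $v_{a_1}\otm\cdots\otm v_{a_n}\in {}^{\Z}(V_{\sp}^{\otm n})_{[{\bf d}]}$ iff the same multiset satisfies condition (\ref{eqn3-1}) for some ${\bf d}'\in[{\bf d}]$. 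The matching then reduces to the combinatorial check that the three linear relations defining $\sim$ on ${\bf D}$ are exactly those imposed by equality of the total $(\varepsilon_1,\varepsilon_2,\varepsilon_3)$-weight of tensors in $V_{\sp}^{\otm n}$; this is a direct finite computation using the explicit weights of $v_0,\ldots,v_7$ in the spin representation. I expect this bookkeeping to be the main conceptual obstacle, namely keeping the two labelling conventions straight---the block index ${\bf d}$ encodes the dominant weight $\lambda_{{\bf d}}$, whereas the index in (\ref{eqn3-1}) records the multiplicities of the $v_k$---and confirming that they are interchangeable modulo $\sim$. Once this is in place, $\gamma_n$ automatically restricts to an isomorphism on each block, finishing the proof.
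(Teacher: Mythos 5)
Your argument follows the same route as the paper's: both use the Verma-module basis~(\ref{base}) of a block's Grothendieck group, index that basis by sequences $(a_1,\dots,a_n)\in[0,7]^n$, and read $\gamma_n$ off as a bijection of $\Z$-bases, with the restriction to each $\O_{[{\bf d}]}$ handled by grouping the single-block bases $B_{{\bf d}'}$ according to the equivalence classes of $\sim$. So in structure and in the key input the two proofs coincide; you spell out the Weyl-orbit description a little more explicitly than the paper does, which is harmless.

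One caveat, and you are right to flag it as ``the main conceptual obstacle'': with the paper's literal formula $\lambda_{\bf d}=\sum_i\sum_j(7-i)\varepsilon_{d_0+\cdots+d_{i-1}+j}$, the coefficient multiset of $\lambda_{\bf d}$ has $d_i$ entries equal to $7-i$, so the Verma basis of $K(\O_{\bf d})$ consists of $[M(a_1,\dots,a_n)]$ with $\#\{m:a_m=k\}=d_{7-k}$, \emph{not} $d_k$ as condition~(\ref{eqn3-1}) requires. This is precisely the bookkeeping mismatch you noticed, but the resolution is not that the two labelings are ``interchangeable modulo~$\sim$'': applying $e_i(\cdot)$ to ${\bf d}$ and to its reversal $\bar{\bf d}$ gives opposite signs, so $[\bar{\bf d}]\neq[{\bf d}]$ in general. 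The paper's own proof glosses over the same point when it asserts that (\ref{eqn3-1}) characterizes $B_{{\bf d}'}$. The honest fix is to take $\lambda_{\bf d}$ with coefficient $i$ rather than $7-i$ (this is also what makes the target blocks in the definitions of $\E_1^{\pm 2}({\bf d})$ etc.\ and formulas (\ref{E1})--(\ref{E20}) come out correctly), after which your bijection-of-bases argument goes through verbatim. You should state this explicitly rather than leaving it as an expected ``direct finite computation,'' because as written the computation does not verify the claim --- it reveals the typo.
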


\begin{proof}  To prove the theorem, it is sufficient to prove that
$\gamma_n:K(\O_{[{\bf d}]})\otimes_{\Z} \C\rightarrow
^{\Z}(V_{\sp}^{\otm n})_{[{\bf d}]}$ is an abelian group isomorphism
for any $[{\bf d}]\in \widetilde{{\bf D}}$. Indeed, the above
abelian group isomorphism will be obvious if we note the following
facts.

For any $[{\bf d}]\in \widetilde{{\bf D}}$ and ${\bf d^{'}}\in[{\bf
d}]$ it is seen from (\ref{base}) that the set of all the symbols
$[M(a_1,\cdots, a_n)]$ satisfying the condition (\ref{eqn3-1}) is a
$\Z$-basis of the Grothendieck group $K(\O_{\bf d^{'}}).$ We denote
this $\Z$-basis by $B_{\bf d^{'}}$. It follows that $B_{[{\bf
d}]}={\mathop{\cup}\limits_{{\bf d^{'}}\in {\bf [d]}}}B_{\bf d^{'}}$
is a $\Z$-basis of the Grothendieck group $K(\O_{[{\bf d}]})$. On
the other hand, if we denote by $B'_{\bf d^{'}}$ the set of
$v_{a_{1}}\otimes v_{a_{2}}\otimes\cdots \otimes v_{a_{n}}$ such
that the sequence $(a_1, \cdots, a_n)\in [0, 7]^n$ satisfies
(\ref{eqn3-1}), then $B^{'}_{[{\bf d}]}:={\mathop{\cup}\limits_{{\bf
d^{'}}\in [{\bf d}]}}B'_{\bf d^{'}}$ is a $\Z$-basis of the weight
space $^{\Z}(V_{\sp}^{\otm n})_{[{\bf d}]}$.
\end{proof}

Let $\Lo$ be the $n$-dimensional fundamental representation of $\gl$
with weights $\varepsilon_1,\varepsilon_2,\cdots ,\varepsilon_n$ and
the corresponding weight vectors $u_1, u_2, \cdots, u_n$. Then its
dual representation $\Ld$ has weights
$-\varepsilon_1,-\varepsilon_2,\cdots, -\varepsilon_n.$ In addition,
we recall some submodules of $\Lo^{\otimes 2}$ and $(\Ld)^{\otimes
2}$ which will be used to construct the functors for our
categorification. Denote by $Sym^2(\Lo)$ the symmetric square of
$\Lo$, i.e, the submodule of $\Lo^{\otimes 2}$ spanned by
$u_i\otimes u_i(1\leq i\leq n)$ and $u_i\otimes u_j+u_j\otimes
u_i(1\leq i< j\leq n)$, and denote by $Alt^2(\Lo)$ the alternative
square of $\Lo$, i.e, the submodule of $\Lo^{\otimes 2}$ spanned by
$u_i\otimes u_j-u_j\otimes u_i(1\leq i< j\leq n)$. Similarly, we
denote by $Sym^2(\Ld)$ and $Alt^2(\Ld)$ the symmetric square and the
alternative square of $\Ld$ respectively. In the following, we
define $\O_{\bf d}$ to be the trivial subcategory of $\O(\gl)$ for
${\bf d}\notin {\bf D}.$ For ${\bf d}\in {\bf D}$ let ${\bf d}_i$
denote the fact that one subtracts 1 from the coefficient at place
$i,$ and ${\bf d}^i$ the fact that one adds 1 to the coefficient at
place $i.$ Then ${\bf d}_i^j$ means that one subtracts 1 from the
coefficient at place $i$ and adds 1 to the coefficient at place $j.$

To categorify the action of $U(\mf{so}(7,\C))$ on $V_{\sp}^{\otimes
n}$, we introduce a series of projective functors of $\O(\gl).$

For ${\bf d}=(d_0,d_1,d_2,d_3,d_4,d_5,d_6,d_7)\in {\bf D}$, define
\begin{eqnarray*}
&&c_1({\bf d}):=d_5+d_4-d_3-d_2,\\
&&c_2({\bf d}):=d_6-d_5+d_2-d_1,\\
&&c_3({\bf d}):=d_7-d_6+d_5-d_4+d_3-d_2+d_1-d_0, \end{eqnarray*} and
for $1\leq i\leq 3$, denote by $\sgn(c_i({\bf d}))$ the sign
function of $c_i({\bf d})$, i.e.,
\[
\sgn(c_i({\bf d}))=\left\{
 \begin{array}{ll}
  1,~{\rm if}~c_i({\bf d})>0,&\\
 0,~{\rm if}~c_i({\bf d})=0,& \\
 -1,~{\rm if}~c_i({\bf d})<0.&
 \end{array}
\right.
\]
Then set
\[\H_i^{\sgn(c_i({\bf d}))}([{\bf d}])=(\Id_{\O_{[{\bf d}]}})^{\oplus \sgn(c_i({\bf d}))c_i({\bf d})}:
 \O_{[{\bf d}]}\to\O_{[{\bf d}]},\]
where $\Id_{\O_{[{\bf d}]}}$ is the identity functor of $\O_{[{\bf
d}]}$. From the definition of the equivalence relation $\thicksim$
on ${\bf D}$ we can see that the functors $\H_i^{\sgn(c_i({\bf
d}))}([{\bf d}])(1\leq i\leq 3)$ are independent on the choice of
the representative ${\bf d}$ of [{\bf d}].

For ${\bf d}\in {\bf D}$ denote
\[\E_1^{+2}({\bf d}):=\textrm{proj}_{\theta_{{\bf d}_2^4}}
\circ F_{Sym^2(\Lo)}: \O_{\bf d}\to\O_{{\bf d}_2^4},\]
\[\E_1^{-2}({\bf d}):=\textrm{proj}_{\theta_{{\bf d}_2^4}}
\circ F_{Alt^2(\Lo)}: \O_{\bf d}\to \O_{{\bf d}_2^4},\]
\[\E_1^{+3}({\bf
d}):=\textrm{proj}_{\theta_{{\bf d}_3^5}} \circ F_{Sym^2(\Lo)}:
\O_{\bf d}\to \O_{{\bf d}_3^5},\]
\[\E_1^{-3}({\bf d}):=\textrm{proj}_{\theta_{{\bf d}_3^5}}
\circ F_{Alt^2(\Lo)}: \O_{\bf d}\to \O_{{\bf d}_3^5}.\] For any
${\bf [d]}\in \widetilde{{\bf D}}$, we set
\[
\E_{1}^+([{\bf d}])={\mathop{\oplus}\limits_{{\bf d^{'}}\in[{\bf
d}]}}(\E_1^{+2}({\bf d^{'}})\oplus\E_1^{+3}({\bf d^{'}})):
\O_{[{\bf d}]}\to \O_{[\overleftarrow{{\bf d}_1}],}
\]
\[
\E_{1}^-([{\bf d}])={\mathop{\oplus}\limits_{{\bf d^{'}}\in[{\bf
d}]}}(\E_1^{-2}({\bf d^{'}})\oplus\E_1^{-3}({\bf d^{'}})):
\O_{[{\bf d}]}\to \O_{[{\bf \overleftarrow{\bf d_1}}]},
\]
where $[{\bf \overleftarrow{\bf d_1}}]=[{{\bf d}_3^5}] =[{{\bf
d}_2^4}].$

For ${\bf d}\in {\bf D}$ denote
\[\E_2^1({\bf
d}):=\textrm{proj}_{\theta_{{\bf d}_1^2}} \circ F_{\Lo}: \O_{\bf
d}\to \O_{{\bf d}_1^2},\]
 \[\E_2^5({\bf
d}):=\textrm{proj}_{\theta_{{{\bf d}_5^6}}} \circ F_{\Lo}: \O_{\bf
d}\to \O_{{\bf d}_5^6}.\] For any $[{\bf d}]\in \widetilde{{\bf
D}}$, set
\[
\E_{2}([{\bf d}])={\mathop{\oplus}\limits_{{\bf d^{'}}\in[{\bf
d}]}}(\E_2^1({\bf d^{'}})\oplus\E_2^5({\bf d^{'}})):   \O_{[{\bf
d}]}\to \O_{[\bf \overleftarrow{\bf d_2}]},
\]
where ${\bf [\overleftarrow{\bf d_2}]}=[{{\bf d}_1^2}] =[{{\bf
d}_5^6}].$

For ${\bf d}\in {\bf D}$ denote \[\E_3^0({\bf
d}):=\textrm{proj}_{\theta_{{{\bf d}_0^1}}} \circ F_{\Lo}: \O_{\bf
d}\to \O_{{\bf d}_0^1},\]
\[\E_3^2({\bf d}):=\textrm{proj}_{\theta_{{{\bf d}_2^3}}}
\circ F_{\Lo}: \O_{\bf d}\to \O_{{\bf d}_2^3},\]
\[\E_3^4({\bf d}):=\textrm{proj}_{\theta_{{{\bf d}_4^5}}} \circ F_{\Lo}:
\O_{\bf d}\to \O_{{\bf d}_4^5},\]
\[\E_3^6({\bf d}):=\textrm{proj}_{\theta_{{{\bf d}_6^7}}}
\circ F_{\Lo}: \O_{\bf d}\to \O_{{\bf d}_6^7}.\] For any $[{\bf
d}]\in \widetilde{{\bf D}}$, set
\[
\E_{3}([{\bf d}])={\mathop{\oplus}\limits_{{\bf d^{'}}\in[{\bf
d}]}}(\E_3^0({\bf d^{'}})\oplus\E_3^2({\bf d^{'}})\oplus\E_3^4({\bf
d^{'}})\oplus\E_3^6({\bf d^{'}})):  \O_{[{\bf d}]}\to \O_{[\bf
\overleftarrow{\bf d_3}]},
\]
where ${\bf [\overleftarrow{\bf d_3}]} =[{{\bf d}_0^1}] =[{{\bf
d}_2^3}]=[{{\bf d}_4^5}]=[{{\bf d}_6^7}].$

For ${\bf d}\in {\bf D}$ denote \[\F_1^{+4}({\bf
d}):=\textrm{proj}_{\theta_{{{\bf d}_4^2}}} \circ F_{Sym^2(\Ld)}:
\O_{\bf d}\to \O_{{\bf d}_4^2},\]
 \[\F_1^{-4}({\bf
d}):=\textrm{proj}_{\theta_{{{\bf d}_4^2}}} \circ F_{Alt^2(\Ld)}:
 \O_{\bf d}\to \O_{{\bf d}_4^2},\]
\[\F_1^{+5}({\bf
d}):=\textrm{proj}_{\theta_{{{\bf d}_5^3}}} \circ F_{Sym^2(\Ld)}:
 \O_{\bf d}\to \O_{{\bf d}_5^3},\]
 \[\F_1^{-5}({\bf
d}):=\textrm{proj}_{\theta_{{{\bf d}_5^3}}}\circ F_{Alt^2(\Ld)}:
 \O_{\bf d}\to \O_{{\bf d}_5^3}\] For any ${\bf [d]}\in
\widetilde{{\bf D}}$, set
\[\F^+_{1}([{\bf d}])={\mathop{\oplus}\limits_{{\bf
d^{'}}\in[{\bf d}]}}(\F_1^{+4}({\bf d^{'}})\oplus\F_1^{+5}({\bf
d^{'}})):  \O_{[{\bf d}]}\to \O_{\bf [\overrightarrow{\bf d_1}]},\]
\[\F^-_{1}([{\bf d}])={\mathop{\oplus}\limits_{{\bf
d^{'}}\in[{\bf d}]}}(\F_1^{-4}({\bf d^{'}})\oplus\F_1^{-5}({\bf
d^{'}})):  \O_{[{\bf d}]}\to \O_{[\bf \overrightarrow{\bf d_1}]},\]
where ${\bf [\overrightarrow{\bf d_1}]}=[{{\bf d}_4^2}] =[{{\bf
d}_5^3}].$

For ${\bf d}\in {\bf D}$ denote \[\F_2^2({\bf
d}):=\textrm{proj}_{\theta_{{{\bf d}_2^1}}} \circ F_{\Ld}: \O_{\bf
d}\to \O_{{\bf d}_2^1},\]
\[\F_2^6({\bf
d}):=\textrm{proj}_{\theta_{{{\bf d}_6^5}}} \circ F_{\Ld}: \O_{\bf
d}\to \O_{{\bf d}_6^5}.\] For any $[{\bf d}]\in \widetilde{{\bf
D}}$, set
\[\F_{2}([{\bf d}])={\mathop{\oplus}\limits_{{\bf
d^{'}}\in[{\bf d}]}}(\F_2^2({\bf d^{'}})\oplus\F_2^6({\bf d^{'}})):
\O_{[{\bf d}]}\to \O_{\bf [\overrightarrow{\bf d_2}]},\] where ${\bf
[\overrightarrow{\bf d_2}]}=[{{\bf d}_2^1}] =[{{\bf d}_6^5}].$

For ${\bf d}\in {\bf D}$ denote \[\F_3^1({\bf
d}):=\textrm{proj}_{\theta_{{{\bf d}_1^0}}} \circ F_{\Ld}: \O_{\bf
d}\to \O_{{\bf d}_1^0},\]
\[\F_3^3({\bf d}):=\textrm{proj}_{\theta_{{{\bf d}_3^2}}}
\circ F_{\Ld}: \O_{\bf d}\to \O_{{\bf d}_3^2},\]
\[\F_3^5({\bf
d}):=\textrm{proj}_{\theta_{{{\bf d}_5^4}}} \circ F_{\Ld}: \O_{\bf
d}\to \O_{{\bf d}_5^4},\]
\[\F_3^7({\bf
d}):=\textrm{proj}_{\theta_{{{\bf d}_7^6}}} \circ F_{\Ld}: \O_{\bf
d}\to \O_{{\bf d}_7^6}.\] For any $[{\bf d}]\in \widetilde{{\bf
D}}$, set
\[\F_{3}([{\bf d}])={\mathop{\oplus}\limits_{{\bf
d^{'}}\in[{\bf d}]}}(\F_3^1({\bf d^{'}})\oplus\F_3^3({\bf
d^{'}})\oplus\F_3^5({\bf d^{'}})\oplus\F_3^7({\bf d^{'}})):
\O_{[{\bf d}]}\to \O_{\bf [\overrightarrow{\bf d_3}]},\] where ${\bf
[\overrightarrow{\bf d_3}]}=[{{\bf d}_1^0}] =[{{\bf d}_3^2}] =[{{\bf
d}_5^4}] =[{{\bf d}_7^6}].$

It can be seen from Proposition \ref{p3} (1), (2), (3) and (4) that
the above functors we introduce are exact and projective functors.
Therefore, they can induce abelian group homomorphisms of the
corresponding Grothendieck groups. By Proposition \ref{p4} and
direct calculations we can obtain the following explicit formulas of
their induced homomorphisms on the basis element $[M(a_1,\cdots,
a_n)]\in B_{\bf d}$, which will be used in checking the
commutativity of the diagrams in Proposition \ref{t2}.
\begin{eqnarray}\label{E1}
&\quad&[\E_1^{+2}({\bf
d})]([M(a_1,\cdots,a_n)])\\
&=&{\mathop{\sum}\limits_{m=1,\atop
a_{m}=2}^n}[M(a_1,\cdots,a_{m-1},a_{m}+2,a_{m+1},\cdots,a_n)]\nonumber\\
&&+{\mathop{\sum}\limits_{1\leq i<j\leq n,\atop
(a_{i},a_{j})=(2,3)~{\rm
or}~(3,2)}}[M(a_1,\cdots,a_{i-1},a_{i}+1,a_{i+1},\cdots,
a_{j-1},a_{j}+1,a_{j+1},\cdots,a_n)].\nonumber
\end{eqnarray}
\begin{eqnarray}\label{E2}
&\quad&[\E_1^{-2}({\bf d})]([M(a_1,\cdots,a_n)])\\
&=&{\mathop{\sum}\limits_{1\leq i<j\leq n,\atop
(a_{i},a_{j})=(2,3)~{\rm
or}~(3,2)}}[M(a_1,\cdots,a_{i-1},a_{i}+1,a_{i+1},\cdots,
a_{j-1},a_{j}+1,a_{j+1},\cdots,a_n)].\nonumber
\end{eqnarray}
\begin{eqnarray}\label{E3}
&\quad&[\E_1^{+3}({\bf d})]([M(a_1,\cdots,
a_n)])\\
&=&{\mathop{\sum}\limits_{m=1,\atop
a_{m}=3}^n}[M(a_1,\cdots,a_{m-1},a_{m}+2,a_{m+1},\cdots,a_n)]\nonumber\\
&&+{\mathop{\sum}\limits_{1\leq i<j\leq n,\atop
(a_{i},a_{j})=(3,4)~{\rm
or}~(4,3)}}[M(a_1,\cdots,a_{i-1},a_{i}+1,a_{i+1},\cdots,
a_{j-1},a_{j}+1,a_{j+1},\cdots,a_n)].\nonumber
\end{eqnarray}
\begin{eqnarray}\label{E4}
&\quad&[\E_1^{-3}({\bf d})]([M(a_1,\cdots,
a_n)])\\
&=&{\mathop{\sum}\limits_{1\leq i<j\leq n,\atop
(a_{i},a_{j})=(3,4)~{\rm
or}~(4,3)}}[M(a_1,\cdots,a_{i-1},a_{i}+1,a_{i+1},\cdots,
a_{j-1},a_{j}+1,a_{j+1},\cdots,a_n)].\nonumber
\end{eqnarray}
\begin{eqnarray}\label{E5}
[\E_2^1({\bf d})]([M(a_1,\cdots,
a_n)])={\mathop{\sum}\limits_{m=1,\atop
a_{m}=1}^n}[M(a_1,\cdots,a_{m-1},a_{m}+1,a_{m+1},\cdots,a_n)].
\end{eqnarray}
\begin{eqnarray}\label{E6}
[\E_2^5({\bf d})]([M(a_1,\cdots,
a_n)])={\mathop{\sum}\limits_{m=1,\atop
a_{m}=5}^n}[M(a_1,\cdots,a_{m-1},a_{m}+1,a_{m+1},\cdots,a_n)].
\end{eqnarray}
\begin{eqnarray}\label{E7}
[\E_3^0({\bf d})]([M(a_1,\cdots,
a_n)])={\mathop{\sum}\limits_{m=1,\atop
a_{m}=0}^n}[M(a_1,\cdots,a_{m-1},a_{m}+1,a_{m+1},\cdots,a_n)].
\end{eqnarray}
\begin{eqnarray}\label{E8}
[\E_3^2({\bf d})]([M(a_1,\cdots,
a_n)])={\mathop{\sum}\limits_{m=1,\atop
a_{m}=2}^n}[M(a_1,\cdots,a_{m-1},a_{m}+1,a_{m+1},\cdots,a_n)].
\end{eqnarray}
\begin{eqnarray}\label{E9}
[\E_3^4({\bf d})]([M(a_1,\cdots,
a_n)])={\mathop{\sum}\limits_{m=1,\atop
a_{m}=4}^n}[M(a_1,\cdots,a_{m-1},a_{m}+1,a_{m+1},\cdots,a_n)].
\end{eqnarray}
\begin{eqnarray}\label{E10}
[\E_3^6({\bf d})]([M(a_1,\cdots,
a_n)])={\mathop{\sum}\limits_{m=1,\atop
a_{m}=6}^n}[M(a_1,\cdots,a_{m-1},a_{m}+1,a_{m+1},\cdots,a_n)].
\end{eqnarray}
\begin{eqnarray}\label{E11}
&\quad&[\F_1^{+4}({\bf d})]([M(a_1,\cdots,
a_n)])\\
&=&{\mathop{\sum}\limits_{m=1,\atop
a_{m}=4}^n}[M(a_1,\cdots,a_{m-1},a_{m}-2,a_{m+1},\cdots,a_n)]\nonumber\\
&&+{\mathop{\sum}\limits_{1\leq i<j\leq n,\atop
(a_{i},a_{j})=(3,4)~{\rm
or}~(4,3)}}[M(a_1,\cdots,a_{i-1},a_{i}-1,a_{i+1},\cdots,
a_{j-1},a_{j}-1,a_{j+1},\cdots,a_n)].\nonumber
\end{eqnarray}
\begin{eqnarray}\label{E12}
&\quad&[\F_1^{-4}({\bf d})]([M(a_1,\cdots,
a_n)])\\
&=&{\mathop{\sum}\limits_{1\leq i<j\leq n,\atop
(a_{i},a_{j})=(3,4)~{\rm
or}~(4,3)}}[M(a_1,\cdots,a_{i-1},a_{i}-1,a_{i+1},\cdots,
a_{j-1},a_{j}-1,a_{j+1},\cdots,a_n)].\nonumber
\end{eqnarray}
\begin{eqnarray}\label{E13}
&\quad&[\F_1^{+5}({\bf d})]([M(a_1,\cdots,
a_n)])\\
&=&{\mathop{\sum}\limits_{m=1,\atop
a_{m}=5}^n}[M(a_1,\cdots,a_{m-1},a_{m}-2,a_{m+1},\cdots,a_n)]\nonumber\\
&&+{\mathop{\sum}\limits_{1\leq i<j\leq n,\atop
(a_{i},a_{j})=(4,5)~{\rm
or}~(4,5)}}[M(a_1,\cdots,a_{i-1},a_{i}-1,a_{i+1},\cdots,
a_{j-1},a_{j}-1,a_{j+1},\cdots,a_n)].\nonumber
\end{eqnarray}
\begin{eqnarray}\label{E14}
&\quad&[\F_1^{-5}({\bf d})]([M(a_1,\cdots,
a_n)])\\
&=&{\mathop{\sum}\limits_{1\leq i<j\leq n,\atop
(a_{i},a_{j})=(4,5)~{\rm
or}~(5,4)}}[M(a_1,\cdots,a_{i-1},a_{i}-1,a_{i+1},\cdots,
a_{j-1},a_{j}-1,a_{j+1},\cdots,a_n)].\nonumber
\end{eqnarray}
\begin{eqnarray}\label{E15}
[\F_2^2({\bf d})]([M(a_1,\cdots,
a_n)])={\mathop{\sum}\limits_{m=1,\atop
a_{m}=2}^n}[M(a_1,\cdots,a_{m-1},a_{m}-1,a_{m+1},\cdots,a_n)].
\end{eqnarray}
\begin{eqnarray}\label{E16}
[\F_2^6({\bf d})]([M(a_1,\cdots,
a_n)])={\mathop{\sum}\limits_{m=1,\atop
a_{m}=6}^n}[M(a_1,\cdots,a_{m-1},a_{m}-1,a_{m+1},\cdots,a_n)].
\end{eqnarray}
\begin{eqnarray}\label{E17}
[\F_3^1({\bf d})]([M(a_1,\cdots,
a_n)])={\mathop{\sum}\limits_{m=1,\atop
a_{m}=1}^n}[M(a_1,\cdots,a_{m-1},a_{m}-1,a_{m+1},\cdots,a_n)].
\end{eqnarray}
\begin{eqnarray}\label{E18}
[\F_3^3({\bf d})]([M(a_1,\cdots,
a_n)])={\mathop{\sum}\limits_{m=1,\atop
a_{m}=3}^n}[M(a_1,\cdots,a_{m-1},a_{m}-1,a_{m+1},\cdots,a_n)].
\end{eqnarray}
\begin{eqnarray}\label{E19}
[\F_3^5({\bf d})]([M(a_1,\cdots,
a_n)])={\mathop{\sum}\limits_{m=1,\atop
a_{m}=5}^n}[M(a_1,\cdots,a_{m-1},a_{m}-1,a_{m+1},\cdots,a_n)].
\end{eqnarray}
\begin{eqnarray}\label{E20}
[\F_3^7({\bf d})]([M(a_1,\cdots,
a_n)])={\mathop{\sum}\limits_{m=1,\atop
a_{m}=7}^n}[M(a_1,\cdots,a_{m-1},a_{m}-1,a_{m+1},\cdots,a_n)].
\end{eqnarray}

Indeed, we check the formula (\ref{E1}) as follows:
\begin{eqnarray*}
&&\quad[\E_1^{+2}({\bf d})]([M(a_1,\cdots, a_n)])\\
&&=[\textrm{proj}_{\theta_{{{\bf d}_2^4}}}(Sym^2(\Lo)\otimes
M(a_1,\cdots, a_n))]\\
&&=[\textrm{proj}_{\theta_{{{\bf d}_2^4}}}]
({\mathop{\sum}\limits_{1\leq i\leq j\leq
n}}[M(a_1,\cdots,a_{i-1},a_{i}+1,a_{i+1},
\cdots,a_{j-1},a_{j}+1,a_{j+1},\cdots,a_n)])\\
&&={\mathop{\sum}\limits_{m=1,\atop
a_{m}=2}^n}[M(a_1,\cdots,a_{m-1},a_{m}+2,a_{m+1},\cdots,a_n)]\\
&&\quad+{\mathop{\sum}\limits_{1\leq i<j\leq n,\atop
(a_{i},a_{j})=(2,3)~{\rm
or}~(3,2)}}[M(a_1,\cdots,a_{i-1},a_{i}+1,a_{i+1},\cdots,
a_{j-1},a_{j}+1,a_{j+1},\cdots,a_n)].
\end{eqnarray*}
Note that the second equality is obtained by Proposition \ref{p4},
while others are obvious by the definitions of the projective
functors $\E_1^{+2}({\bf d})$, $F_{Sym^2(\Lo)}$ and
$\textrm{proj}_{\theta}$.

Similarly, we can get the formulas from (\ref {E2}) to (\ref {E20}).

Now, by the formulas (\ref{E1})--(\ref {E20}), a categorification of
the  action of $U(\mf{so}(7,\C))$ on the $n$-th tensor power of its
spin representation can be obtained as follows.

\begin{prop}
\label{t2}
\begin{enumerate}
\item
For any $1\leq i\leq 3$ and $[{\bf d}]\in \widetilde{{\bf D}}$, the
action of $h_i$ on $(V_{\sp}^{\otm n})_{[{\bf d}]}$ can be
categorified by the exact functor $\H_i^{\sgn(c_i({\bf d}))}([{\bf
d}])$, which means that the following diagram commutes$:$
\[
\begin{array}{ccc}
K(\O_{[{\bf d}]}) &
\stackrel{}{\xrightarrow[\quad\quad\quad]{\gamma_n}}
 & ^{\Z}(V_{\sp}^{\otm n})_{[{\bf d}]}  \\
\Big\downarrow\vcenter{ \llap{$[\H_i^{\sgn(c_i({\bf d}))}({[{\bf
d}]})]~$}}     & &
\Big\downarrow\vcenter{ \rlap{$\sgn(c_i({\bf d}))h_i$}}  \\
K(\O_{[\bf d]}) &
\stackrel{}{\xrightarrow[\quad\quad\quad]{\gamma_n}} &
^{\Z}(V_{\sp}^{\otm n})_{[\bf d]}.
\end{array}
\]
\item
\begin{enumerate}
\item
For any $[{\bf d}]\in \widetilde{{\bf D}},$ the restriction of $e_1$
from $(V_{\sp}^{\otm n})_{[{\bf d}]}$ to $(V_{\sp}^{\otm n})_{\bf
[\overleftarrow{{\bf d_1}}]}$ can be categorified by a pair of exact
functors $(\E^+_{1}([{\bf d}]),\E^-_{1}([{\bf d}]))$, which means
that the following diagram commutes$:$
\[
\begin{array}{ccc}
K(\O_{[{\bf d}]}) & \stackrel{}
{\xrightarrow[\quad\quad\quad\quad\quad\quad]{\gamma_n}} & ^{\Z}(V_{\sp}^{\otm n})_{[{\bf d}]}  \\
\Big\downarrow\vcenter{ \llap{$[\E^+_{1}([{\bf d}])]-[\E^-_{1}({\bf
[d]})]~$}}     & &
\Big\downarrow\vcenter{ \rlap{$e_1$}}  \\
K(\O_{\bf [\overleftarrow{{\bf d_1}}]}) &
\stackrel{}{\xrightarrow[\quad\quad\quad\quad\quad\quad]{\gamma_n}}
& ^{\Z}(V_{\sp}^{\otm n})_{\bf [\overleftarrow{{\bf d_1}}]}.
\end{array}
\]
\item
For any $2\leq i\leq 3$ and $[{\bf d}]\in \widetilde{{\bf D}}$, the
restriction of $e_i$ from $(V_{\sp}^{\otm n})_{[{\bf d}]}$ to
$(V_{\sp}^{\otm n})_{\bf [\overleftarrow{{\bf d_i}}]}$ can be
categorified by the exact functor $\E_{i}([{\bf d}])$, which means
that the following diagram commutes$:$
\[
\begin{array}{ccc}
K(\O_{[{\bf d}]}) & \stackrel{\gamma_n}{\longrightarrow} & ^{\Z}(V_{\sp}^{\otm n})_{[{\bf d}]}  \\
\Big\downarrow\vcenter{ \llap{$[\E_{i}([{\bf d}])]~$}} & &
\Big\downarrow\vcenter{ \rlap{$e_i$}}  \\
K(\O_{\bf [\overleftarrow{{\bf d_i}}]}) &
\stackrel{\gamma_n}{\longrightarrow} &  ^{\Z}(V_{\sp}^{\otm n})_{\bf
[\overleftarrow{{\bf d_i}}]}.
\end{array}
\]
\end{enumerate}
\item
\begin{enumerate}
\item
For any $[{\bf d}]\in \widetilde{{\bf D}},$ the restriction of $f_1$
from $(V_{\sp}^{\otm n})_{[{\bf d}]}$ to $(V_{\sp}^{\otm n})_{\bf
[\overrightarrow{{\bf d_1}}]}$ can be categorified by a pair of
exact functors $(\F^+_{1}([{\bf d}]),\F^-_{1}([{\bf d}]))$, which
means that the following diagram commutes$:$
\[
\begin{array}{ccc}
K(\O_{[{\bf d}]}) & \stackrel{}
{\xrightarrow[\quad\quad\quad\quad\quad\quad]{\gamma_n}} & ^{\Z}(V_{\sp}^{\otm n})_{[{\bf d}]}  \\
\Big\downarrow\vcenter{ \llap{$[\F^+_{1}([{\bf d}])]-[\F^-_{1}({\bf
[d]})]~$}}     & &
\Big\downarrow\vcenter{ \rlap{$f_1$}}  \\
K(\O_{\bf [\overrightarrow{{\bf d_1}}]}) &
\stackrel{}{\xrightarrow[\quad\quad\quad\quad\quad\quad]{\gamma_n}}
& ^{\Z}(V_{\sp}^{\otm n})_{\bf [\overrightarrow{{\bf d_1}}]}.
\end{array}
\]
\item
For any $2\leq i\leq 3$ and $[{\bf d}]\in \widetilde{{\bf D}}$,
the restriction of $f_i$ from $(V_{\sp}^{\otm n})_{[{\bf d}]}$ to
$(V_{\sp}^{\otm
n})_{\bf
 [\overrightarrow{{\bf d_i}}]}$ can be categorified by the exact
functor $\F_{i}([{\bf d}])$, which means that the following diagram
commutes$:$
\[
\begin{array}{ccc}
K(\O_{[{\bf d}]}) & \stackrel{\gamma_n}{\longrightarrow} & ^{\Z}(V_{\sp}^{\otm n})_{[{\bf d}]}  \\
\Big\downarrow\vcenter{ \llap{$[\F_{i}([{\bf d}])]~$}} & &
\Big\downarrow\vcenter{ \rlap{$f_i$}}  \\
K(\O_{\bf [\overrightarrow{{\bf d_i}}]}) &
\stackrel{\gamma_n}{\longrightarrow} &  ^{\Z}(V_{\sp}^{\otm n})_{\bf
[\overrightarrow{{\bf d_i}}]}.
\end{array}
\]
\end{enumerate}
\end{enumerate}
\end{prop}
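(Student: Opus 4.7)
My plan is to reduce each of the diagrams to a pointwise check on the distinguished $\Z$-basis of $K(\O_{[{\bf d}]})$ exhibited in the proof of Theorem \ref{t1}. Since $\gamma_n$ sends $[M(a_1,\ldots,a_n)]$ to the basis vector $v_{a_1}\otm\cdots\otm v_{a_n}$ and all four arrows in every diagram are $\Z$-linear, it suffices to evaluate both compositions on a single class $[M(a_1,\ldots,a_n)]$ satisfying condition (\ref{eqn3-1}) for some ${\bf d}'\in[{\bf d}]$ and compare.

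For part (1), I would first note that $c_i({\bf d})$ is constant on the equivalence class $[{\bf d}]$: the relation $\thicksim$ is precisely the requirement that $c_1,c_2,c_3$ agree on representatives. The action of $U(\mf{so}(7,\C))$ on $V_{\sp}$ is diagonal on the weight basis $v_0,\ldots,v_7$, and reading the eigenvalues of $h_i$ off the tables yields, by the Leibniz rule, $h_i\cdot(v_{a_1}\otm\cdots\otm v_{a_n})=c_i({\bf d}')(v_{a_1}\otm\cdots\otm v_{a_n})$ on every generator of $(V_{\sp}^{\otm n})_{[{\bf d}]}$. On the categorical side, $[\H_i^{\sgn(c_i({\bf d}))}([{\bf d}])]=|c_i({\bf d})|\cdot\Id$ by definition, so both compositions act as multiplication by $\sgn(c_i({\bf d}))c_i({\bf d})=|c_i({\bf d})|$ and commutativity is immediate, including the degenerate case $c_i({\bf d})=0$ in which both sides vanish.

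For parts (2) and (3), I would compute the Leibniz action of $e_i,f_i$ on $v_{a_1}\otm\cdots\otm v_{a_n}$ directly from the action tables for $V_{\sp}$ and compare term by term with the formulas (\ref{E1})--(\ref{E20}). For $i=2,3$ the operators $e_i,f_i$ shift a single weight index by $\pm 1$, and each summand in (\ref{E5})--(\ref{E10}) and (\ref{E15})--(\ref{E20}) matches a single Leibniz term, so the diagrams in (2)(b) and (3)(b) close by inspection. For $i=1$ the operators move a single weight index by $\pm 2$; the key observation is that the pair-type contributions indexed by $(a_i,a_j)\in\{(2,3),(3,2)\}$ and $\{(3,4),(4,3)\}$ appearing in (\ref{E1}) and (\ref{E3}) from $Sym^2(\Lo)$ are matched identically by the corresponding terms in (\ref{E2}) and (\ref{E4}) from $Alt^2(\Lo)$, and hence cancel in the difference $[\E_1^+([{\bf d}])]-[\E_1^-([{\bf d}])]$. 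What survives is exactly $\sum_{a_m\in\{2,3\}}[M(\ldots,a_m+2,\ldots)]$, which under $\gamma_n$ is precisely the Leibniz expansion of $e_1$. The verification for $f_1$ is entirely parallel, using (\ref{E11})--(\ref{E14}).

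The main conceptual step is this last observation: since $e_1$ and $f_1$ shift weights on the spin weight lattice by $\pm 2$ while a single copy of $\Lo$ (or $\Ld$) shifts weights only by $\pm 1$, one is forced to tensor with $\Lo^{\otm 2}$ (resp.\ $(\Ld)^{\otm 2}$). The decomposition $\Lo^{\otm 2}=Sym^2(\Lo)\oplus Alt^2(\Lo)$ separates the diagonal weight contributions $2\varepsilon_i$, which appear only in $Sym^2$, from the pair contributions $\varepsilon_i+\varepsilon_j$ with $i<j$, which appear in both; taking the difference of the two projected functors therefore isolates precisely the diagonal contribution needed to match $e_1$ (resp.\ $f_1$). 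Once this structural observation is in hand, all remaining verifications reduce to the multi-case bookkeeping already carried out in deriving (\ref{E1})--(\ref{E20}) via Proposition \ref{p4}.
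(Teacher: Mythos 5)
Your proposal is correct and follows essentially the same route as the paper: reducing each diagram to a pointwise check on the $\Z$-basis from Theorem \ref{t1}, computing the Leibniz action of $h_i,e_i,f_i$ term by term against the formulas (\ref{E1})--(\ref{E20}), and observing that the pair-type summands from $Sym^2$ and $Alt^2$ cancel in the differences $[\E_1^+]-[\E_1^-]$ and $[\F_1^+]-[\F_1^-]$, leaving exactly the diagonal shift by $2$ needed for $e_1,f_1$. The closing remark explaining why the $Sym^2/Alt^2$ decomposition isolates the diagonal weight $2\varepsilon_i$ is a helpful conceptual gloss not spelled out in the paper but consistent with it.
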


\begin{proof}
Here we check $(1)$ and $(2)$ in some cases.
Other cases can be verified similarly.

(1) To check $\gamma_n\circ[\H_i^{\sgn(c_i({\bf d}))}({[{\bf
d}]})]=\sgn(c_i({\bf d}))h_i\circ\gamma_n$ is equivalent to check
$$\gamma_n\circ[\H_i^{\sgn(c_i({\bf d}))}({[{\bf
d}]})]([M(a_1,\cdots, a_n)])=\sgn(c_i({\bf
d}))h_i\circ\gamma_n([M(a_1,\cdots, a_n)])$$ for any $[M(a_1,\cdots,
a_n)]\in B_{[{\bf d}]}={\mathop{\cup}\limits_{{\bf d^{'}}\in [{\bf
d}]}}B_{\bf d^{'}}.$ In fact, we have
\begin{eqnarray*}
&\quad&\gamma_n\circ[\H_i^{\sgn(c_i({\bf d}))}({[{\bf
d}]})]([M(a_1,\cdots,
a_n)])\\
&=&|c_i({\bf d})|\gamma_n([M(a_1,\cdots, a_n)])\\
&=&|c_i({\bf d})|(v_{a_1}\otimes v_{a_2}\otimes\cdots \otimes
v_{a_n})\\
&=&\sgn(c_i({\bf d}))c_i({\bf d})(v_{a_1}\otimes
v_{a_2}\otimes\cdots
\otimes v_{a_n})\\
&=&\sgn(c_i({\bf d}))h_i\circ\gamma_n([M(a_1,\cdots, a_n)]).
\end{eqnarray*}

(2) (a) To verify the commutativity of the diagram in (a), it
suffices to check $\gamma_{n}\circ([\E^+_{1}([{\bf
d}])]-[\E^-_{1}({\bf [d]})])=e_1\circ\gamma_{n}.$ Indeed, for any
$[{\bf d}]\in \widetilde{{\bf D}}$ and $[M(a_1,\cdots, a_n)]\in
B_{[{\bf d}]}={\mathop{\cup}\limits_{{\bf d^{'}}\in [{\bf
d}]}}B_{\bf d^{'}}$, we assume $[M(a_1,\cdots, a_n)]\in B_{{\bf
d}_{0}}$ for some ${\bf d}_{0}\in [{\bf d}]$, by (\ref {E1})--(\ref
{E4}), we have the following formulas:
\begin{eqnarray*}
&\quad&[\E^+_{1}([{\bf d}])]([M(a_1,\cdots, a_n)])\\
&=&[{\mathop{\oplus}\limits_{{\bf
d^{'}}\in[{\bf d}]}}(\E_1^{+2}({\bf d^{'}})\oplus\E_1^{+3}({\bf d^{'}}))M(a_1,\cdots, a_n)]\\
&=&[(\E_1^{+2}({\bf d}_{0})\oplus\E_1^{+3}({\bf d}_{0}))M(a_1,\cdots,a_n)]\\
&=&{\mathop{\sum}\limits_{m=1,\atop
a_{m}=2}^n}[M(a_1,\cdots,a_{m-1},a_{m}+2,a_{m+1},\cdots,a_n)]\\
&&+{\mathop{\sum}\limits_{m=1,\atop a_{m}=3}^n}[M(a_1,\cdots,a_{m-1},a_{m}+2,a_{m+1},\cdots,a_n)]\\
&&+{\mathop{\sum}\limits_{1\leq i<j\leq n,\atop
(a_{i},a_{j})=(2,3)~{\rm
or}~(3,2)}}[M(a_1,\cdots,a_{i-1},a_{i}+1,a_{i+1},\cdots,
a_{j-1},a_{j}+1,a_{j+1},\cdots,a_n)]\\
&&+{\mathop{\sum}\limits_{1\leq i<j\leq n,\atop
(a_{i},a_{j})=(3,4)~{\rm
or}~(4,3)}}[M(a_1,\cdots,a_{i-1},a_{i}+1,a_{i+1},\cdots,
a_{j-1},a_{j}+1,a_{j+1},\cdots,a_n)],
\end{eqnarray*}
and
\begin{eqnarray*}
&\quad&[\E^-_{1}([{\bf d}])]([M(a_1,\cdots, a_n)])\\
&=&[{\mathop{\oplus}\limits_{{\bf
d^{'}}\in[{\bf d}]}}(\E_1^{-2}({\bf d^{'}})\oplus\E_1^{-3}({\bf d^{'}}))M(a_1,\cdots, a_n)]\\
&=&[(\E_1^{-2}({\bf d}_{0})\oplus\E_1^{-3}({\bf d}_{0}))M(a_1,\cdots,a_n)]\\
&=&{\mathop{\sum}\limits_{1\leq i<j\leq n,\atop
(a_{i},a_{j})=(2,3)~{\rm
or}~(3,2)}}[M(a_1,\cdots,a_{i-1},a_{i}+1,a_{i+1},\cdots,
a_{j-1},a_{j}+1,a_{j+1},\cdots,a_n)]\\
&&+{\mathop{\sum}\limits_{1\leq i<j\leq n,\atop
(a_{i},a_{j})=(3,4)~{\rm
or}~(4,3)}}[M(a_1,\cdots,a_{i-1},a_{i}+1,a_{i+1},\cdots,
a_{j-1},a_{j}+1,a_{j+1},\cdots,a_n)].
\end{eqnarray*}

It follows that
\begin{eqnarray*}
&\quad&\gamma_{n}\circ([\E^+_{1}([{\bf d}])]-[\E^-_{1}({\bf
[d]})])([M(a_1,\cdots, a_n)])\\
&=&\gamma_{n}({\mathop{\sum}\limits_{m=1,\atop
a_{m}=2}^n}[M(a_1,\cdots,a_{m-1},a_{m}+2,a_{m+1},\cdots,a_n)])
+\gamma_{n}({\mathop{\sum}\limits_{m=1,\atop
a_{m}=3}^n}[M(a_1,\cdots,a_{m-1},a_{m}+2,a_{m+1},\cdots,a_n)])\\
&=&{\mathop{\sum}\limits_{m=1,\atop
a_{m}=2}^n}(v_{a_1}\otimes\cdots\otimes v_{a_{m-1}}\otimes
v_{a_{m}+2}\otimes v_{a_{m+1}}\otimes\cdots\otimes v_{a_{n}})
+{\mathop{\sum}\limits_{m=1,\atop
a_{m}=3}^n}(v_{a_1}\otimes\cdots\otimes v_{a_{m-1}}\otimes
v_{a_{m}+2}\otimes v_{a_{m+1}}\otimes\cdots\otimes v_{a_{n}}).
\end{eqnarray*}

Moreover, we have
\begin{eqnarray*}
&\quad&e_1\circ\gamma_{n}([M(a_1,\cdots, a_n)])\\
&=&e_1(v_{a_1}\otimes\cdots\otimes v_{a_{m-1}}\otimes
v_{a_{m}}\otimes v_{a_{m+1}}\otimes\cdots\otimes
v_{a_{n}})\\
&=&{\mathop{\sum}\limits_{m=1}^{n}}(v_{a_1}\otimes\cdots\otimes
v_{a_{m-1}}\otimes e_1v_{a_{m}}\otimes
v_{a_{m+1}}\otimes\cdots\otimes v_{a_{n}})\\
&=&{\mathop{\sum}\limits_{m=1,\atop
a_{m}=2}^n}(v_{a_1}\otimes\cdots\otimes v_{a_{m-1}}\otimes
v_{a_{m}+2}\otimes v_{a_{m+1}}\otimes\cdots\otimes v_{a_{n}})\\
&&+{\mathop{\sum}\limits_{m=1,\atop
a_{m}=3}^n}(v_{a_1}\otimes\cdots\otimes v_{a_{m-1}}\otimes
v_{a_{m}+2}\otimes v_{a_{m+1}}\otimes\cdots\otimes v_{a_{n}}).
\end{eqnarray*}
Therefore $\gamma_{n}\circ([\E^+_{1}([{\bf d}])]-[\E^-_{1}({\bf
[d]})])=e_1\circ\gamma_{n}.$

(b) We give the proof of the case $i=3.$

In the following we will prove $\gamma_{n}\circ[\E_{3}([{\bf
d}])]=e_3\circ\gamma_{n}$ which means the diagram in this case
commutes. In fact, for any $[{\bf d}]\in \widetilde{{\bf D}}$ and
$[M(a_1,\cdots, a_n)]\in B_{[{\bf d}]}={\mathop{\cup}\limits_{{\bf
d^{'}}\in [{\bf d}]}}B_{\bf d^{'}}$, we only need to check
$$\gamma_{n}\circ[\E_{3}([{\bf d}])]([M(a_1,\cdots,
a_n)])=e_3\circ\gamma_{n}([M(a_1,\cdots, a_n)]).$$ Assume that
$[M(a_1,\cdots, a_n)]\in B_{{\bf d}_{0}}$ for some ${\bf d}_{0}\in
[{\bf d}]$, by (\ref {E7})--(\ref {E10}), we have

\begin{eqnarray*}
&\quad&[\E_{3}([{\bf d}])]([M(a_1,\cdots, a_n)])\\
&=&[{\mathop{\oplus}\limits_{{\bf d^{'}}\in[{\bf d}]}}(\E_3^{0}({\bf
d^{'}})\oplus\E_3^{2}({\bf d^{'}})
\oplus\E_3^{4}({\bf d^{'}})\oplus\E_3^{6}({\bf d^{'}}))M(a_1,\cdots, a_n)]\\
&=&[(\E_3^{0}({\bf d}_{0})\oplus\E_3^{2}({\bf d}_{0})
\oplus\E_3^{4}({\bf d}_{0})\oplus\E_3^{6}({\bf d}_{0}))M(a_1,\cdots, a_n)]\\
&=&{\mathop{\sum}\limits_{m=1,\atop
a_{m}=0}^n}[M(a_1,\cdots,a_{m-1},a_{m}+1,a_{m+1},\cdots,a_n)]
+{\mathop{\sum}\limits_{m=1,\atop
a_{m}=2}^n}[M(a_1,\cdots,a_{m-1},a_{m}+1,a_{m+1},\cdots,a_n)]\\
&&+{\mathop{\sum}\limits_{m=1,\atop
a_{m}=4}^n}[M(a_1,\cdots,a_{m-1},a_{m}+1,a_{m+1},\cdots,a_n)]
+{\mathop{\sum}\limits_{m=1,\atop
a_{m}=6}^n}[M(a_1,\cdots,a_{m-1},a_{m}+1,a_{m+1},\cdots,a_n)].\\
\end{eqnarray*}

It follows that
\begin{eqnarray*}
&\quad&\gamma_{n}\circ[\E_{3}([{\bf d}])]([M(a_1,\cdots, a_n)])\\
&=&\gamma_{n}({\mathop{\sum}\limits_{m=1,\atop
a_{m}=0}^n}[M(a_1,\cdots,a_{m-1},a_{m}+1,a_{m+1},\cdots,a_n)])+\gamma_{n}({\mathop{\sum}\limits_{m=1,\atop
a_{m}=2}^n}[M(a_1,\cdots,a_{m-1},a_{m}+1,a_{m+1},\cdots,a_n)])\\
&&+\gamma_{n}({\mathop{\sum}\limits_{m=1,\atop
a_{m}=4}^n}[M(a_1,\cdots,a_{m-1},a_{m}+1,a_{m+1},\cdots,a_n)])+\gamma_{n}({\mathop{\sum}\limits_{m=1,\atop
a_{m}=6}^n}[M(a_1,\cdots,a_{m-1},a_{m}+1,a_{m+1},\cdots,a_n)])\\
&=&{\mathop{\sum}\limits_{m=1,\atop
a_{m}=0}^n}(v_{a_1}\otimes\cdots\otimes v_{a_{m-1}}\otimes
v_{a_{m}+2}\otimes v_{a_{m+1}}\otimes\cdots\otimes v_{a_{n}})
+{\mathop{\sum}\limits_{m=1,\atop
a_{m}=2}^n}(v_{a_1}\otimes\cdots\otimes v_{a_{m-1}}\otimes
v_{a_{m}+2}\otimes v_{a_{m+1}}\otimes\cdots\otimes v_{a_{n}})\\
&&+{\mathop{\sum}\limits_{m=1,\atop
a_{m}=4}^n}(v_{a_1}\otimes\cdots\otimes v_{a_{m-1}}\otimes
v_{a_{m}+2}\otimes v_{a_{m+1}}\otimes\cdots\otimes
v_{a_{n}})+{\mathop{\sum}\limits_{m=1,\atop
a_{m}=6}^n}(v_{a_1}\otimes\cdots\otimes v_{a_{m-1}}\otimes
v_{a_{m}+2}\otimes v_{a_{m+1}}\otimes\cdots\otimes v_{a_{n}}).
\end{eqnarray*}

Moreover, we have
\begin{eqnarray*}
&\quad&e_3\circ\gamma_{n}([M(a_1,\cdots, a_n)])\\
&=&e_3(v_{a_1}\otimes\cdots\otimes v_{a_{m-1}}\otimes
v_{a_{m}}\otimes v_{a_{m+1}}\otimes\cdots\otimes
v_{a_{n}})\\
&=&{\mathop{\sum}\limits_{m=1}^{n}}(v_{a_1}\otimes\cdots\otimes
v_{a_{m-1}}\otimes e_3v_{a_{m}}\otimes
v_{a_{m+1}}\otimes\cdots\otimes v_{a_{n}})\\
&=&{\mathop{\sum}\limits_{m=1,\atop
a_{m}=0}^n}(v_{a_1}\otimes\cdots\otimes v_{a_{m-1}}\otimes
v_{a_{m}+1}\otimes v_{a_{m+1}}\otimes\cdots\otimes v_{a_{n}})
+{\mathop{\sum}\limits_{m=1,\atop
a_{m}=2}^n}(v_{a_1}\otimes\cdots\otimes v_{a_{m-1}}\otimes
v_{a_{m}+1}\otimes v_{a_{m+1}}\otimes\cdots\otimes v_{a_{n}})\\
&&+{\mathop{\sum}\limits_{m=1,\atop
a_{m}=4}^n}(v_{a_1}\otimes\cdots\otimes v_{a_{m-1}}\otimes
v_{a_{m}+1}\otimes v_{a_{m+1}}\otimes\cdots\otimes v_{a_{n}})
+{\mathop{\sum}\limits_{m=1,\atop
a_{m}=6}^n}(v_{a_1}\otimes\cdots\otimes v_{a_{m-1}}\otimes
v_{a_{m}+1}\otimes v_{a_{m+1}}\otimes\cdots\otimes v_{a_{n}}).
\end{eqnarray*}
Hence $\gamma_{n}\circ[\E_{3}([{\bf d}])]([M(a_1,\cdots,
a_n)])=e_3\circ\gamma_{n}([M(a_1,\cdots, a_n)]).$
\end{proof}

\begin{thm}\label{c1}
For any $1\leq i\leq 3$ and $2\leq j\leq 3,$ let
\[\H^+_i={\mathop{\oplus}\limits_{[{\bf d}]\in
\widetilde{{\bf D}},\atop \sgn(c_i({\bf d}))=1~{\rm
or}~0}}\H_i^{\sgn(c_i({\bf d}))}([{\bf d}]):\O^{n}\to\O^{n},\quad
\H^-_i={\mathop{\oplus}\limits_{[{\bf d}]\in \widetilde{{\bf
D}},\atop \sgn(c_i({\bf d}))=-1}}\H_i^{\sgn(c_i({\bf d}))}([{\bf
d}]):\O^{n}\to\O^{n},\]
\[\E^+_{1}={\mathop{\oplus}\limits_{[{\bf d}]\in \widetilde{{\bf
D}}}}\E^+_{1}([{\bf d}]):\O^{n}\to\O^{n},\quad
\E^-_{1}={\mathop{\oplus}\limits_{[{\bf d}]\in \widetilde{{\bf
D}}}}\E^-_{1}([{\bf d}]):\O^{n}\to\O^{n},\]
\[\F^+_{1}={\mathop{\oplus}\limits_{[{\bf d}]\in
\widetilde{{\bf D}}}}\F^+_{1}([{\bf d}]):\O^{n}\to\O^{n},\quad
\F^-_{1}={\mathop{\oplus}\limits_{[{\bf d}]\in
\widetilde{{\bf D}}}}\F^-_{1}([{\bf d}]):\O^{n}\to\O^{n},\]
\[\E_{j}={\mathop{\oplus}\limits_{[{\bf d}]\in
\widetilde{{\bf D}}}}\E_{j}([{\bf d}]):\O^{n}\to\O^{n},\quad
\F_{j}={\mathop{\oplus}\limits_{[{\bf d}]\in \widetilde{{\bf
D}}}}\F_{j}([{\bf d}]):\O^{n}\to\O^{n}.\] Then we have the following
results.
\begin{enumerate}
\item
For any $1\leq i\leq 3,$ the action of $h_i$ on $V_{\sp}^{\otm n}$
can be categorified by a pair of exact functors
$(\H^+_{i},\H^-_{i}),$ which means that the following diagram commutes$:$
\[
\begin{array}{ccc}
K(\O^{n}) & \stackrel{}{\xrightarrow[\quad\quad\quad]{\gamma_n}} & ^{\Z}V_{\sp}^{\otm n} \\
\Big\downarrow\vcenter{ \llap{$[\H^+_{i}]-[\H^-_{i}]~$}} & &
\Big\downarrow\vcenter{ \rlap{$h_i$}}  \\
K(\O^{n}) & \stackrel{}{\xrightarrow[\quad\quad\quad]{\gamma_n}} &
^{\Z}V_{\sp}^{\otm n}.
\end{array}
\]
\item
\begin{enumerate}
\item
The action of $e_1$ on $V_{\sp}^{\otm n}$ can be categorified by a
pair of exact functors $(\E^+_{1},\E^-_{1})$, which means that the
following diagram commutes$:$
\[
\begin{array}{ccc}
K(\O^{n}) & \stackrel{}{\xrightarrow[\quad\quad\quad]{\gamma_n}} & ^{\Z}V_{\sp}^{\otm n} \\
\Big\downarrow\vcenter{ \llap{$[\E^+_{1}]-[\E^-_{1}]~$}} & &
\Big\downarrow\vcenter{ \rlap{$e_1$}}  \\
K(\O^{n}) & \stackrel{}{\xrightarrow[\quad\quad\quad]{\gamma_n}} &
^{\Z}V_{\sp}^{\otm n}.
\end{array}
\]
\item
For any $2\leq i\leq 3$, the action of $e_i$ on $V_{\sp}^{\otm n}$
 can be
categorified by the exact functor $\E_{i}$, which means that the
following diagram commutes$:$
\[
\begin{array}{ccc}
K(\O^{n}) & \stackrel{\gamma_n}{\longrightarrow} & ^{\Z}V_{\sp}^{\otm n} \\
\Big\downarrow\vcenter{ \llap{$[\E_{i}]~$}}     & &
\Big\downarrow\vcenter{ \rlap{$e_i$}}  \\
K(\O^{n}) & \stackrel{\gamma_n}{\longrightarrow} &
^{\Z}V_{\sp}^{\otm n}.
\end{array}
\]
\end{enumerate}
\item
\begin{enumerate}
\item
The action of $f_1$ on $V_{\sp}^{\otm n}$ can be categorified by a
pair of exact functors $(\F^+_{1},\F^-_{1})$, which means that the
following diagram commutes$:$
\[
\begin{array}{ccc}
K(\O^{n}) & \stackrel{}{\xrightarrow[\quad\quad\quad]{\gamma_n}} & ^{\Z}V_{\sp}^{\otm n} \\
\Big\downarrow\vcenter{ \llap{$[\F^+_{1}]-[\F^-_{1}]~$}} & &
\Big\downarrow\vcenter{ \rlap{$f_1$}}  \\
K(\O^{n}) & \stackrel{}{\xrightarrow[\quad\quad\quad]{\gamma_n}} &
^{\Z}V_{\sp}^{\otm n}.
\end{array}
\]
\item
For any $2\leq i\leq 3$, the action of $f_i$ on $V_{\sp}^{\otm n}$
can be categorified by the exact functor $\F_{i}$, which means that
the following diagram commutes$:$
\[
\begin{array}{ccc}
K(\O^{n}) & \stackrel{\gamma_n}{\longrightarrow} & ^{\Z}V_{\sp}^{\otm n} \\
\Big\downarrow\vcenter{ \llap{$[\F_{i}]~$}}     & &
\Big\downarrow\vcenter{ \rlap{$f_i$}}  \\
K(\O^{n}) & \stackrel{\gamma_n}{\longrightarrow} &
^{\Z}V_{\sp}^{\otm n}.
\end{array}
\]
\end{enumerate}
\end{enumerate}
\end{thm}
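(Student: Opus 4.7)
The plan is to reduce everything to the block-wise statement already established in Proposition \ref{t2} by exploiting the fact that all three sides of each commutative diagram split compatibly along the weight decomposition indexed by $\widetilde{\mathbf{D}}$. Concretely, by construction $\O^{n}=\bigoplus_{[\mathbf d]\in\widetilde{\mathbf D}}\O_{[\mathbf d]}$, so $K(\O^{n})=\bigoplus_{[\mathbf d]}K(\O_{[\mathbf d]})$; on the representation side $^{\Z}V_{\sp}^{\otm n}=\bigoplus_{[\mathbf d]}{^{\Z}(V_{\sp}^{\otm n})_{[\mathbf d]}}$; and Theorem \ref{t1} tells us that $\gamma_{n}$ is the direct sum of its restrictions to the summands indexed by $[\mathbf d]$. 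Moreover, each of the global functors $\H_{i}^{\pm}$, $\E_{1}^{\pm}$, $\F_{1}^{\pm}$, $\E_{j}$, $\F_{j}$ is by definition a direct sum over $[\mathbf d]$ of the block functors appearing in Proposition \ref{t2}, so their induced maps on $K(\O^{n})$ also split as direct sums.

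The strategy is then to verify each of the six diagrams block by block. For parts $(2)(b)$ and $(3)(b)$, corresponding to $e_{j}$ and $f_{j}$ with $j\in\{2,3\}$, the argument is immediate: the restriction of the diagram to the $[\mathbf d]$-summand is exactly the diagram of Proposition \ref{t2}(2)(b) (respectively (3)(b)) for that $[\mathbf d]$, so it commutes, and commutativity of the global diagram follows by taking the direct sum over $[\mathbf d]\in\widetilde{\mathbf D}$. Parts $(2)(a)$ and $(3)(a)$ for $e_{1}$ and $f_{1}$ are handled in exactly the same way, using Proposition \ref{t2}(2)(a) and (3)(a) respectively; here we also use that the operation $[\,\cdot\,]$ on the Grothendieck group is additive on direct sums of exact functors, so $[\E_{1}^{+}]-[\E_{1}^{-}]$ restricted to $K(\O_{[\mathbf d]})$ is precisely $[\E_{1}^{+}([\mathbf d])]-[\E_{1}^{-}([\mathbf d])]$, and similarly for $f_{1}$.

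The one point that needs a small extra observation is part $(1)$, the categorification of $h_{i}$, because the block statement in Proposition \ref{t2}(1) involves the sign $\sgn(c_{i}(\mathbf d))$. The definition separates $\widetilde{\mathbf D}$ into two sets: the collection of $[\mathbf d]$ with $\sgn(c_{i}(\mathbf d))\in\{0,1\}$, contributing to $\H_{i}^{+}$, and the collection with $\sgn(c_{i}(\mathbf d))=-1$, contributing to $\H_{i}^{-}$. Restricting $[\H_{i}^{+}]-[\H_{i}^{-}]$ to the summand $K(\O_{[\mathbf d]})$ yields either $[\H_{i}^{\sgn(c_{i}(\mathbf d))}([\mathbf d])]$ (when $\sgn(c_{i}(\mathbf d))\ge 0$) or $-[\H_{i}^{-1}([\mathbf d])]$ (when $\sgn(c_{i}(\mathbf d))=-1$). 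Because $[\H_{i}^{\sgn(c_{i}(\mathbf d))}([\mathbf d])]$ acts on $K(\O_{[\mathbf d]})$ as multiplication by $|c_{i}(\mathbf d)|$, in both cases the composite $\gamma_{n}\circ([\H_{i}^{+}]-[\H_{i}^{-}])$ produces multiplication by $\sgn(c_{i}(\mathbf d))\cdot|c_{i}(\mathbf d)|=c_{i}(\mathbf d)$, which is precisely the scalar by which $h_{i}$ acts on $(V_{\sp}^{\otm n})_{[\mathbf d]}$ (as one checks from the weights of $v_{0},\dots,v_{7}$). Thus on every weight summand the diagram commutes, and by the direct sum decomposition so does the global diagram.

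The only potential obstacle is bookkeeping: making sure the sign conventions in the definition of $\H_{i}^{\pm}$ line up with the individual cases $c_{i}(\mathbf d)>0$, $c_{i}(\mathbf d)=0$, and $c_{i}(\mathbf d)<0$; once this is checked cleanly, the argument is purely a direct-sum assembly of Proposition \ref{t2}, with no new computation required.
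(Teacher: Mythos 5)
Your proposal is correct and takes essentially the same route as the paper: the paper's proof of Theorem~\ref{c1} simply states that the diagrams commute ``by Proposition~\ref{t2},'' and your block-wise direct-sum decomposition over $[\mathbf d]\in\widetilde{\mathbf D}$ is precisely the argument that statement is pointing at, including the sign bookkeeping for $\H_i^{\pm}$ which resolves as you describe to multiplication by $c_i(\mathbf d)$ on each block.
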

\begin{proof}
It is not difficult to check the diagrams are
commutative by Proposition \ref{t2}.
\end{proof}

Now we categorify defining relations of $U(\mf{so}(7,\C))$ as
natural isomorphisms between some projective functors of
$\O(\mf{gl}_n)$. Proposition \ref{p3} (5) plays an important role in the proof
of the following results.

\begin{thm}
\label{t3}
\begin{enumerate}
\item
$\H^+_i\circ\H^+_j\oplus\H^-_i\circ\H^-_j
\oplus\H^+_j\circ\H^-_i\oplus\H^-_j\circ\H^+_i
\cong
\H^+_i\circ\H^-_j\oplus\H^-_i\circ\H^+_j
\oplus\H^+_j\circ\H^+_i\oplus\H^-_j\circ\H^-_i$ for $1\leq i,j\leq 3.$
\item
\begin{enumerate}
\item
$\E^+_1\circ\F^+_1\oplus\E^-_1\circ\F^-_1
\oplus\F^+_1\circ\E^-_1\oplus\F^-_1\circ\E^+_1\oplus\H^-_1
\cong\E^+_1\circ\F^-_1\oplus\E^-_1\circ\F^+_1
\oplus\F^+_1\circ\E^+_1\oplus\F^-_1\circ\E^-_1\oplus
\H^+_1;$
\item
$\E^+_1\circ\F_j\oplus\F_j\circ\E^-_1
\cong\E^-_1\circ\F_j\oplus\F_j\circ\E^+_1$ for $j=2,3;$
\item
$\E_i\circ\F^+_1\oplus\F^-_1\circ\E_i
\cong\E_i\circ\F^-_1\oplus\F^+_1\circ\E_i$ for $i=2,3;$
\item
$\E_i\circ\F_j\oplus\delta_{i,j}\H^-_i
\cong\F_j\circ\E_i\oplus\delta_{i,j}\H^+_i$
for $(i,j)=(2,2), (2,3), (3,2)$ or $(3,3).$
\end{enumerate}

\item
\begin{enumerate}
\item
$\H^+_1\circ\E^+_1\oplus\H^-_1\circ\E^-_1
\oplus\E^+_1\circ\H^-_1\oplus\E^-_1\circ\H^+_1\oplus(\E^-_1)^{\oplus 2}
\cong
\H^+_1\circ\E^-_1\oplus\H^-_1\circ\E^+_1
\oplus\E^+_1\circ\H^+_1\oplus\E^-_1\circ\H^-_1\oplus(\E^+_1)^{\oplus
2};$
\item
$\H^+_i\circ\E^+_1\oplus\H^-_i\circ\E^-_1\oplus\E^+_1\circ\H^-_i\oplus\E^-_1\circ\H^+_i\oplus(\E^+_1)^{\oplus
(-a_{i,1})}
\cong
\H^+_i\circ\E^-_1\oplus\H^-_i\circ\E^+_1\oplus\E^+_1\circ\H^+_i\oplus\E^-_1\circ\H^-_i\oplus(\E^-_1)^{\oplus
(-a_{i,1})}$ for $i=2, 3;$
\item
$\H^+_i\circ\E_j\oplus
\E_j\circ\H^-_i\oplus(\E_j)^{\oplus(-a_{i,j})} \cong
\H^-_i\circ\E_j\oplus \E_j\circ\H^+_i$
for $(i,j)=(1,2), (1,3), (2,3)$ or $(3,2);$
\item
$\H^+_i\circ\E_i\oplus
\E_i\circ\H^-_i\cong \H^-_i\circ\E_i\oplus
\E_i\circ\H^+_i\oplus\E_i^{\oplus 2}$ for $i=2, 3.$
\end{enumerate}
\item
\begin{enumerate}
\item
$\H^+_1\circ\F^+_1\oplus\H^-_1\circ\F^-_1\oplus\F^+_1\circ\H^-_1\oplus\F^-_1
\circ\H^+_1\oplus(\F^+_1)^{\oplus 2}
\cong
\H^+_1\circ\F^-_1\oplus\H^-_1\circ\F^+_1\oplus\F^+_1\circ\H^+_1\oplus\F^-_1
\circ\H^-_1\oplus(\F^-_1)^{\oplus 2};$
\item
$\H^+_i\circ\F^+_1\oplus\H^-_i\circ\F^-_1\oplus\F^+_1\circ\H^-_i\oplus\F^-_1\circ\H^+_i\oplus(\F^-_1)^{\oplus
(-a_{i,1})}
\cong
\H^+_i\circ\F^-_1\oplus\H^-_i\circ\F^+_1\oplus\F^+_1\circ\H^+_i\oplus\F^-_1\circ\H^-_i\oplus(\F^+_1)^{\oplus
(-a_{i,1})}$ for $i=2,3;$
\item
$\H^+_i\circ\F_j\oplus
\F_j\circ\H^-_i \cong \H^-_i\circ\F_j\oplus
\F_j\circ\H^+_i\oplus(\F_j)^{\oplus(-a_{i,j})}$
 for $(i,j)=(1,2), (1,3), (2,3)$ or $(3,2);$
\item
$\H^+_i\circ\F_i\oplus
\F_i\circ\H^-_i\oplus\F_i^{\oplus 2}\cong \H^-_i\circ\F_i\oplus
\F_i\circ\H^+_i$ for $i=2, 3.$
\end{enumerate}

\item
\begin{enumerate}
\item
$\E^+_{1}\circ\E^+_{1}\circ\E_{2}\oplus\E^-_{1}\circ\E^-_{1}\circ\E_{2}\oplus
\E_{2}\circ\E^+_{1}\circ\E^+_{1}\oplus
\E_{2}\circ\E^-_{1}\circ\E^-_{1}\oplus(\E^+_{1}\circ\E_{2}\circ\E^-_{1})^{\oplus
2}\oplus(\E^-_{1}\circ\E_{2}\circ\E^+_{1})^{\oplus 2}\\
\cong\E^+_{1}\circ\E^-_{1}\circ\E_{2}\oplus\E^-_{1}\circ\E^+_{1}\circ\E_{2}
\oplus \E_{2}\circ\E^+_{1}\circ\E^-_{1}\oplus
\E_{2}\circ\E^-_{1}\circ\E^+_{1}\oplus(\E^+_{1}\circ\E_{2}\circ\E^+_{1})^{\oplus
2}\oplus(\E^-_{1}\circ\E_{2}\circ\E^-_{1})^{\oplus 2};$
\item
$\E^+_{1}\circ\E_{3}\oplus\E_{3}\circ\E^-_{1}\cong\E^-_{1}\circ\E_{3}\oplus\E_{3}\circ\E^+_{1};$
\item
$\E_{2}\circ\E_{2}\circ\E^+_{1}\oplus
\E^+_{1}\circ\E_{2}\circ\E_{2}\oplus(\E_{2}\circ\E^-_{1}\circ\E_{2})^{\oplus
2} \cong\E_{2}\circ\E_{2}\circ\E^-_{1}\oplus
\E^-_{1}\circ\E_{2}\circ\E_{2}\oplus(\E_{2}\circ\E^+_{1}\circ\E_{2})^{\oplus
2};$
\item
$\E_{2}\circ\E_{2}\circ\E_{3}\oplus
\E_{3}\circ\E_{2}\circ\E_{2}\cong(\E_{2}\circ\E_{3}\circ\E_{2})^{\oplus
2};$
\item
$\E_{3}\circ\E_{3}\circ\E_{3}\circ\E_{2}\oplus(\E_{3}\circ\E_{2}\circ\E_{3}\circ\E_{3})^{\oplus
3} \cong(\E_{3}\circ\E_{3}\circ\E_{2}\circ\E_{3})^{\oplus
3}\oplus\E_{2}\circ\E_{3}\circ\E_{3}\circ\E_{3}$.
\end{enumerate}

\item
\begin{enumerate}
\item
$\F^+_{1}\circ\F^+_{1}\circ\F_{2}\oplus\F^-_{1}\circ\F^-_{1}\circ\F_{2}\oplus
\F_{2}\circ\F^+_{1}\circ\F^+_{1}\oplus
\F_{2}\circ\F^-_{1}\circ\F^-_{1}\oplus(\F^+_{1}\circ\F_{2}\circ\F^-_{1})^{\oplus
2}\oplus(\F^-_{1}\circ\F_{2}\circ\F^+_{1})^{\oplus 2}\\
\cong\F^+_{1}\circ\F^-_{1}\circ\F_{2}\oplus\F^-_{1}\circ\F^+_{1}\circ\F_{2}
\oplus \F_{2}\circ\F^+_{1}\circ\F^-_{1}\oplus
\F_{2}\circ\F^-_{1}\circ\F^+_{1}\oplus(\F^+_{1}\circ\F_{2}\circ\F^+_{1})^{\oplus
2}\oplus(\F^-_{1}\circ\F_{2}\circ\F^-_{1})^{\oplus 2};$
\item
$\F^+_{1}\circ\F_{3}\oplus\F_{3}\circ\F^-_{1}\cong\F^-_{1}\circ\F_{3}\oplus\F_{3}\circ\F^+_{1};$
\item
$\F_{2}\circ\F_{2}\circ\F^+_{1}\oplus
\F^+_{1}\circ\F_{2}\circ\F_{2}\oplus(\F_{2}\circ\F^-_{1}\circ\F_{2})^{\oplus
2} \cong\F_{2}\circ\F_{2}\circ\F^-_{1}\oplus
\F^-_{1}\circ\F_{2}\circ\F_{2}\oplus(\F_{2}\circ\F^+_{1}\circ\F_{2})^{\oplus
2};$
\item
$\F_{2}\circ\F_{2}\circ\F_{3}\oplus
\F_{3}\circ\F_{2}\circ\F_{2}\cong(\F_{2}\circ\F_{3}\circ\F_{2})^{\oplus
2};$
\item
$\F_{3}\circ\F_{3}\circ\F_{3}\circ\F_{2}\oplus(\F_{3}\circ\F_{2}\circ\F_{3}\circ\F_{3})^{\oplus
3} \cong(\F_{3}\circ\F_{3}\circ\F_{2}\circ\F_{3})^{\oplus
3}\oplus\F_{2}\circ\F_{3}\circ\F_{3}\circ\F_{3}.$
\end{enumerate}
\end{enumerate}
\end{thm}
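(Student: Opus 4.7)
The plan is to reduce every functor isomorphism in the theorem to an equality of classes in the Grothendieck group $K(\O^n)$ and then invoke Proposition \ref{p3}(5), which asserts that projective functors with equal Grothendieck classes are isomorphic. By Proposition \ref{p3}(1)(2)(3), every composition and direct sum of basic functors appearing in the statement is itself a projective functor, so this reduction is legitimate: it suffices to show, for each identity $F \cong G$, that $[F]=[G]$ in $K(\O^n)$.

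To verify the resulting class-level equalities, I push through the abelian-group isomorphism $\gamma_n$ of Theorem \ref{t1}. By Theorem \ref{c1}, the differences $[\H^+_i]-[\H^-_i]$, $[\E^+_1]-[\E^-_1]$, $[\F^+_1]-[\F^-_1]$ and the single classes $[\E_j]$, $[\F_j]$ (for $j=2,3$) correspond under $\gamma_n$ to the operators $h_i$, $e_1$, $f_1$, $e_j$, $f_j$ on $V_{\sp}^{\otm n}$, respectively. Rewriting each statement of the theorem as ``LHS class equals RHS class'' and rearranging, one finds that (1) translates into $[h_i,h_j]=0$; (2)(a)--(d) into $[e_i,f_j]=\delta_{i,j}h_i$; (3) into $[h_i,e_j]=a_{i,j}e_j$; (4) into $[h_i,f_j]=-a_{i,j}f_j$; and (5), (6) into the Serre relations for the $e$'s and the $f$'s. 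For instance, (5)(e) is precisely the expansion of the $a_{3,2}=-2$ Serre relation $e_3^3 e_2-3 e_3^2 e_2 e_3+3 e_3 e_2 e_3^2-e_2 e_3^3=0$, with positive and negative contributions placed on opposite sides of the direct sum.

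Since $V_{\sp}^{\otm n}$ is a genuine $U(\mf{so}(7,\C))$-module, each of these operator identities holds on $V_{\sp}^{\otm n}$; combined with the bijectivity of $\gamma_n$ on the canonical bases used in the proof of Theorem \ref{t1}, this yields the required equalities in $K(\O^n)$, and Proposition \ref{p3}(5) then promotes them to functor isomorphisms. The conceptual content of the theorem therefore reduces to the fact that the carefully chosen direct-sum decompositions produce, after the $\pm$ expansions of $h_i$, $e_1$, $f_1$, exactly the polynomial identities that define $U(\mf{so}(7,\C))$.

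The main obstacle is purely bookkeeping. Using Proposition \ref{p4} (equivalently the explicit formulas (\ref{E1})--(\ref{E20})), one must verify that every cross-term produced by expanding products such as $([\E^+_1]-[\E^-_1])^k[\E_2]$, $([\H^+_i]-[\H^-_i])[\E_j]$, or the analogous $\F$-versions distributes into exactly the summands listed in the theorem, with the right multiplicities. The four-fold compositions in (5)(e) and (6)(e), which carry the coefficient pattern $(1,-3,3,-1)$ inherited from the $1-a_{3,2}=3$ Serre relation, are the most labor-intensive to check, but they follow from exactly the same mechanism as the simpler quadratic relations in (5)(a)--(d) and (6)(a)--(d).
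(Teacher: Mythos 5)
Your proposal matches the paper's own proof in both structure and substance: reduce each functor isomorphism to an equality of Grothendieck classes via Proposition \ref{p3}(5), use Theorem \ref{c1} and the injectivity of $\gamma_n$ to translate the class equality into the corresponding defining relation of $U(\mf{so}(7,\C))$ acting on $V_{\sp}^{\otimes n}$, and then observe that the direct-sum splittings in the statement are exactly the positive and negative terms of the expanded polynomial identity. The paper carries this out explicitly for representative cases (1), (2)(a), (3)(a), (4)(b), (5)(a), (6)(e) and notes the rest are analogous, which is precisely the bookkeeping you identify.
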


\begin{proof}
We check Theorem \ref{t3} in some cases, while the remaining cases
can be verified similarly.

(1) By Proposition \ref{p3} (5), we only need to check that
\begin{eqnarray}\label{equ3.2}
&\quad&[\H^+_i]\circ[\H^+_j]+[\H^-_i]\circ[\H^-_j]+[\H^+_j]\circ[\H^-_i]+[\H^-_j]\circ[\H^+_i]\\
&=&[\H^+_i]\circ[\H^-_j]+[\H^-_i]\circ[\H^+_j]+[\H^+_j]\circ[\H^+_i]+[\H^-_j]
\circ[\H^-_i].\nonumber
\end{eqnarray}
Indeed,  it is easy to see that $h_ih_j\gamma_n=h_jh_i\gamma_n.$ By
Theorem \ref{c1} (1), we have
$$h_ih_j\gamma_n=\gamma_n\circ([\H^+_i]-[\H^-_i])\circ([\H^+_j]-[\H^-_j]),$$
$$h_jh_i\gamma_n=\gamma_n\circ([\H^+_j]-[\H^-_j])\circ([\H^+_i]-[\H^-_i]).$$
Therefore,
\begin{eqnarray}\label{equ3.3}
([\H^+_i]-[\H^-_i])\circ([\H^+_j]-[\H^-_j])=([\H^+_j]-[\H^-_j])
\circ([\H^+_i]-[\H^-_i]).
\end{eqnarray}
We immediately get (\ref{equ3.2}) by expanding (\ref{equ3.3}).

(2) (a) By Proposition \ref{p3} (5), it is enough to check that
\begin{eqnarray}\label{equ3.4}
&&[\E^+_1]\circ[\F^+_1]+[\E^-_1]\circ[\F^-_1]+[\F^+_1]\circ[\E^-_1]+[\F^-_1]\circ[\E^+_1]+[\H^-_1]\\
&=&[\E^+_1]\circ[\F^-_1]+[\E^-_1]\circ[\F^+_1]+[\F^+_1]\circ[\E^+_1]+[\F^-_1]\circ[\E^-_1]+[\H^+_1].\nonumber
\end{eqnarray}
Indeed, noting that\\
(i)\quad $(e_1f_1-f_1e_1)\gamma_n=h_1\gamma_n,$ \\
(ii)\quad
$(e_1f_1-f_1e_1)\gamma_n=\gamma_n\circ\big[([\E^+_1]-[\E^-_1])\circ([\F^+_1]-[\F^-_1])
-([\F^+_1]-[\F^-_1])\circ([\E^+_1]-[\E^-_1])\big],$ \\
(iii)\quad $h_1\gamma_n=\gamma_n\circ([\H^+_1]-[\H^-_1]),$\\
where (ii) and (iii) follow from Theorem \ref{c1} (1), (2)(a) and
(3)(a), we have
\begin{eqnarray}\label{equ3.5}
([\E^+_1]-[\E^-_1])\circ([\F^+_1]-[\F^-_1])-([\F^+_1]-[\F^-_1])\circ([\E^+_1]-[\E^-_1])=[\H^+_1]-[\H^-_1],
\end{eqnarray}
(\ref{equ3.4}) is obtained by expanding (\ref{equ3.5}).

(3) (a) By Proposition \ref{p3} (5), it suffices to check that
\begin{eqnarray}\label{equ3.6}
&\quad&[\H^+_1]\circ[\E^+_1]+[\H^-_1]\circ[\E^-_1]+[\E^+_1]\circ[\H^-_1]+[\E^-_1]\circ[\H^+_1]+2[\E^-_1]\\
&=&
[\H^+_1]\circ[\E^-_1]+[\H^-_1]\circ[\E^+_1]+[\E^+_1]\circ[\H^+_1]+[\E^-_1]\circ[\H^-_1]+2[\E^+_1].\nonumber
\end{eqnarray}
Indeed,  noting that\\
(i)\quad $(h_1e_1-e_1h_1)\gamma_n=2e_1\gamma_n,$\\
(ii)\quad
$(h_1e_1-e_1h_1)\gamma_n=\gamma_n\circ\big[([\H^+_1]-[\H^-_1])\circ([\E^+_1]-[\E^-_1])
-([\E^+_1]-[\E^-_1])\circ([\H^+_1]-[\H^-_1])\big],$\\
(iii)\quad $2e_1\gamma_n=2\gamma_n\circ([\E^+_1]-[\E^-_1]),$\\
where (ii) and (iii) can be seen from Theorem \ref{c1} (1) and
(2)(a), we have
\begin{eqnarray}\label{equ3.7}
([\H^+_1]-[\H^-_1])\circ([\E^+_1]-[\E^-_1])
-([\E^+_1]-[\E^-_1])\circ([\H^+_1]-[\H^-_1])=2([\E^+_1]-[\E^-_1]),
\end{eqnarray}
(\ref{equ3.6}) follows from expanding (\ref{equ3.7}).

(4) (b) We only give the proof of the case $(i,j)=(2,1).$ By
Proposition \ref{p3} (5), it is sufficient to check that
\begin{eqnarray}\label{equ3.8}
&\quad&[\H^+_2]\circ[\F^+_1]+[\H^-_2]\circ[\F^-_1]+[\F^+_1]\circ[\H^-_2]+[\F^-_1]\circ[\H^+_2]+[\F^-_1]\\
&=&
[\H^+_2]\circ[\F^-_1]+[\H^-_2]\circ[\F^+_1]+[\F^+_1]\circ[\H^+_2]+[\F^-_1]\circ[\H^-_2]+[\F^+_1].\nonumber
\end{eqnarray}
Indeed, noting that\\
(i)\quad $(h_2f_1-f_1h_2)\gamma_n=f_1\gamma_n,$\\
(ii)\quad
$(h_2f_1-f_1h_2)\gamma_n=\gamma_n\circ\big[([\H^+_2]-[\H^-_2])\circ([\F^+_1]-[\F^-_1])
-([\F^+_1]-[\F^-_1])\circ([\H^+_2]-[\H^-_2])\big],$\\
(iii)\quad $f_1\gamma_n=\gamma_n\circ([\F^+_1]-[\F^-_1]),$\\
where (ii) and (iii) follow from Theorem \ref{c1} (1) and (3)(a), we
have
\begin{eqnarray}\label{equ3.9}
([\H^+_2]-[\H^-_2])\circ([\F^+_1]-[\F^-_1])
-([\F^+_1]-[\F^-_1])\circ([\H^+_2]-[\H^-_2])=[\F^+_1]-[\F^-_1],
\end{eqnarray}
We obtain (\ref{equ3.8}) by expanding (\ref{equ3.9}).

(5) (a) By Proposition \ref{p3} (5), it is equivalent to check
\begin{eqnarray}\label{equ3.10}
&\quad&[\E^+_{1}]\circ[\E^+_{1}]\circ[\E_{2}]+[\E^-_{1}]\circ[\E^-_{1}]\circ[\E_{2}]+
[\E_{2}]\circ[\E^+_{1}]\circ[\E^+_{1}]\\
&\quad&+[\E_{2}]\circ[\E^-_{1}]\circ[\E^-_{1}]+2[\E^+_{1}]\circ[\E_{2}]\circ[\E^-_{1}]
+2[\E^-_{1}]\circ[\E_{2}]\circ[\E^+_{1}]\nonumber\\
&=&[\E^+_{1}]\circ[\E^-_{1}]\circ[\E_{2}]+[\E^-_{1}]\circ[\E^+_{1}]\circ[\E_{2}]
+[\E_{2}]\circ[\E^+_{1}]\circ[\E^-_{1}]\nonumber\\
&\quad&+[\E_{2}]\circ[\E^-_{1}]\circ[\E^+_{1}]+2[\E^+_{1}]\circ[\E_{2}]\circ[\E^+_{1}]
+2[(\E^-_{1}]\circ[\E_{2}]\circ[\E^-_{1}]).\nonumber
\end{eqnarray}
Indeed, noting that\\
(i)\quad $(e^2_1e_2-2e_1e_2e_1+e_2e^2_1)\gamma_n=0,$\\
(ii)\quad by Theorem \ref{c1} (2) (a) and (b),
$(e^2_1e_2-2e_1e_2e_1+e_2e^2_1)\gamma_n
=\gamma_n\circ\big[([\E^+_1]-[\E^-_1])
\circ([\E^+_1]-[\E^-_1])\circ[\E_2]
-2([\E^+_1]-[\E^-_1])\circ[\E_2]\circ([\E^+_1]-[\E^-_1])
+[\E_2]\circ([\E^+_1]-[\E^-_1])\circ([\E^+_1]-[\E^-_1])\big],$\\
we have $(3.31)$
\begin{eqnarray*}
([\E^+_1]-[\E^-_1])\circ([\E^+_1]-[\E^-_1])\circ[\E_2]
-2([\E^+_1]-[\E^-_1])\circ[\E_2]\circ([\E^+_1]-[\E^-_1])
+[\E_2]\circ([\E^+_1]-[\E^-_1])\circ([\E^+_1]-[\E^-_1])=0,
\end{eqnarray*}
which is (\ref{equ3.10}) after expanding.

(6) (e) By Proposition \ref{p3} (5), it is equivalent to verify
\[[\F_{3}]\circ[\F_{3}]\circ[\F_{3}]\circ[\F_{2}]+3[\F_{3}]\circ[\F_{2}]\circ[\F_{3}]\circ[\F_{3}]
=3[\F_{3}]\circ[\F_{3}]\circ[\F_{2}]\circ[\F_{3}]+[\F_{2}]\circ[\F_{3}]
\circ[\F_{3}]\circ[\F_{3}].\]
 By Theorem \ref{c1} (3) (b), we have
\begin{eqnarray*}
&&\gamma_n\circ([\F_{3}]\circ[\F_{3}]\circ[\F_{3}]\circ[\F_{2}]
+3[\F_{3}]\circ[\F_{2}]\circ[\F_{3}]\circ[\F_{3}])\\
&&=(f^3_{3}f_{2}+3f_{3}f_{2}f^2_{3})\circ\gamma_n=(3f^2_{3}f_{2}f_{3}+f_{2}f^3_{3})\circ\gamma_n\\
&&=\gamma_n\circ(3[\F_{3}]\circ[\F_{3}]\circ[\F_{2}]\circ[\F_{3}]
+[\F_{2}]\circ[\F_{3}]\circ[\F_{3}]\circ[\F_{3}]).
\end{eqnarray*}
Thus
$[\F_{3}]\circ[\F_{3}]\circ[\F_{3}]\circ[\F_{2}]+3[\F_{3}]\circ[\F_{2}]\circ[\F_{3}]\circ[\F_{3}]
=3[\F_{3}]\circ[\F_{3}]\circ[\F_{2}]\circ[\F_{3}]+[\F_{2}]\circ[\F_{3}]
\circ[\F_{3}]\circ[\F_{3}]$ since $\gamma_n$ is an abelian group
isomorphism.

\end{proof}

\section*{Acknowledgement}

We are grateful to the referee for many useful comments, suggestions
and information about relevant references.


\begin{thebibliography}{111}

\bibitem{BFK}\label{BFK}
J. Bernstein, I. Frenkel and M. Khovanov, A categorification of the
Temperley-Lieb algebra and Schur quotients of $U(\mf sl_2)$ via
projective and Zuckerman functors, Selecta Math. (N.S.) 5 (2)
(1999), 199-241.

\bibitem{BG}\label{BG}
J. N. Bernstein and S. I. Gelfand, Tensor products of finite and
infinite dimensional representations of semisimple Lie algebras,
Compositio Math. (1980), 245-285.

\bibitem{BGG}\label{BGG}
J. N. Bernstein, I. M. Gelfand and S. I. Gelfand, Category of
$\mf{g}$-modules, Functional Anal. and Appl. 2 (1976), 87-92.

\bibitem{C}\label{C}
L. Crane, Clock and category: is quantum gravity algebraic?, Jour.
Math. Phys. 36 (1995), 6180-6193.

\bibitem{CF}\label{CF}
L. Crane and I. Frenkel, Four-dimensional topological quantum field
theory, Hopf categories, and the canonical bases, in: Topology and
Physics, J. Math. Phys. 35 (10) (1994), 5136-5154.

\bibitem{Di}\label{Di}
J. Dixmier, Enveloping algebras, Revised reprint of the 1977
translation. Graduate Studies in Mathematics, 11. American
Mathematical Society, Providence, RI, 1996.

\bibitem{FH}\label{FH}
W. Fulton and J. Harris, Representation Theory: A First Course,
Graduate Texts in Mathematics vol. 129, Springer-Verlag, 1991.


\bibitem{HK}\label{HK}
J. Hong and S. J. Kang, Introduction to Quantum Groups and Crystal
Bases, Grad. Stud. Math., vol. 42, Amer. Math. Soc., Providence, RI,
2002.

\bibitem{H}\label{H}
J. E. Humphreys, Representations of semisimple Lie algebras in the
BGG category $\O$, Grad. Stud. Math., vol. 94, Amer. Math. Soc.,
Providence, RI, 2008.

\bibitem{KMS479}\label{KMS479}
M. Khovanov, V. Mazorchuk and C. Stroppel, A brief review of abelian
categorifications, Theory Appl. Categ. 22 (2009), No. 19, 479-508.

\bibitem{KMS1163}\label{KMS1163}
M. Khovanov, V. Mazorchuk and C. Stroppel, A categorification of
integral Specht modules, Proc. Amer. Math. Soc. 136 (2008), No. 4,
1163-1169.

\bibitem{LM}\label{LM}
B. Lawson and M. Michelson, Spin Geometry, Princeton University
Press, 1989.

\bibitem{M}\label{M}
V. Mazorchuk, Lectures on algebraic categorification,
arXiv:1011.0144

\bibitem{MM}\label{MM}
V. Mazorchuk and V. Miemietz, Cell 2-representations of finitary
2-categories, Compositio Math.147 (2011), 1519-1545.

\bibitem{MS1363}\label{MS1363}
V. Mazorchuk and C. Stroppel, Categorification of (induced) cell
modules and the rough structure of generalised Verma modules, Adv.
Math. 219 (2008), No. 4, 1363-1426.

\bibitem{MS1}\label{MS1}
V. Mazorchuk and C. Stroppel, Categorification of Wedderburn's basis
for $\C[S_n]$, Arch. Math. (Basel) 91 (2008), No. 1, 1-11.

\bibitem{MS2939}\label{MS2939}
V. Mazorchuk and C. Stroppel, Translation and shuffling of
projectively presentable modules and a categorification of a
parabolic Hecke module, Trans. Amer. Math. Soc. 357 (2005), No. 7,
2939-2973.

\bibitem{R}\label{R} R. Rouquier, 2-Kac-Moody algebras, arXiv:math.RT/0812.5023.

\bibitem{S}\label{S}
C. Stroppel, Categorification of the Temperley-Lieb category,
tangles, and cobordisms via projective functors, Duke Math. J. 126
(3) (2005), 547-596.

\bibitem{SU}\label{SU}
J. Sussan, Category $\O$ and $\mf sl_k$-link invariants,
arXiv:math.QA/0701045.

\end{thebibliography}
\end{document}